\theoremstyle{plain}
\newtheorem{example}{Example}
\newtheorem{proposition}{Proposition}
\numberwithin{equation}{section}
\author{ BA Amadou Diadie}
\email{ba.amadou-diadie@ugb.edu.sn}
\author{LO Gane Samb}
\email{gane-samb.lo@ugb.edu.sn}
\begin{document}
\title[Uncertainty and Inaccuracy measure estimation]{Non parametric estimation of residual-past entropy, mean residual-past lifetime, residual-past inaccuracy measure and asymptotic limits}

\begin{abstract}
In the present work, we provide the asymptotic behavior of 
residual-past entropy, of the mean residual-past lifetime distribution and of the residual-past inaccuracy measure. We are interested in these measures of uncertainty in the discrete case. Almost sure rates of convergence and asymptotic normality results are established. Our theoretical results are validated by simulations.\\ 
\end{abstract}

\maketitle
\section{Introduction}
\subsection{Motivation}

\noindent Let $X$ be a finite discrete random  describing the lifetime of a component/system and defined on a probability space $(\Omega,\mathcal{A},\mathbb{P})$. Suppose 
\\ 
$X(\Omega)=\{x_1,\cdots,x_r\}\, \ \  (0<x_1<\cdots<x_r)$, with probability mass function (\textit{p.m.f.}) $ p_{j}=\mathbb{P}(X= x_j),\ \ j\in J=[1,r]$ and denote $P$ and $\overline{P}$ the corresponding cumulative distribution and survival functions defined respectively by :
\begin{eqnarray*}\label{pkovpk}
&&P(x)=\mathbb{P}(X\in(-\infty,x]) =\sum_{j\in J,\,x_j\leq x}p_j\\
\text{and} \ \ 
 && \overline{P}(x)=\mathbb{P}(X\in (x,+\infty))=\sum_{j\in J,\,x_j> x}p_j
 \end{eqnarray*} for any $x\in  \mathbb{R}
 $. A classical measure of uncertainty
for the random variable $X$ is the \textit{Shannon entropy} also known as the Shannon information measure, defined as (see \cite{shan2})  
\begin{equation}\label{shn}
\mathcal{E}_{Sh}(X)=-\sum_{J\in J}p_j\log p_j
\end{equation}
where $"\log"$ stands for the natural logarithm. \\

\bigskip \noindent In the literature, the reliability and the information theory are used to study the behaviour
of a component/system. Given that at age $x_j$, the component has survived up to age $x_j$ or has been found failing at age $x_j$, $\mathcal{E}_{Sh}(X)$ is no longer useful for measuring the uncertainty about the remaining lifetime or about the past lifetime since the age should be taken into  account (see \cite{ebrah}).\\
 
\noindent  Instead, many others measures of uncertainty was defined such as 
the \textit{residual entropy}, \textit{past entropy}, \textit{cumulative residual entropy}, \textit{cumulative past entropy}, \textit{cumulative residual entropy}, \textit{cumulative past entropy}, etc. Their importance can be seen through their
appearance in several important theorems of information theory such as reliability 
engineering, survival analysis, demography, actuarial study and 
others.\\

\noindent These measures of uncertainty are defined in the continuous setting, but 
there are many situations where a continuous time is inappropriate for describing the lifetime of devices and other systems. For example 
the lifetime of many devices in industry, such as switches and mechanical tools, depends essentially on the number of times that they are turned on and off or the number of shocks they receive. In such cases, the time to failure is often more appropriately represented by the number of times they are used before they fail, which is a discrete random variable.\\

\bigskip \noindent Typically, ‘\textit{lifetime}’ refers to human life length, the 
life span of a device before it fails,
 the survival time of a patient 
with  serious  disease  from  the  date  of  diagnosis  or  major 
treatment or the duration an individual remains married, the 
durations  of  coalitions, the length of time to complete a PhD degree, the duration an individual remains 
unemployed,  the  duration  an  individual  stays  in  an employment, the duration of a  war, the length a leader stays in power,   etc.\\

\bigskip  \noindent
 For $ j\in [1,r-1]$, the  random variable $X^{(j)}=[X-x_j/X>x_j]$ describes the remaining lifetime of the component, given that it has survived up to time $x_j$. Whereas
  the random variable $X_{(j)}=[x_j-X|X\leq x_j]$,  for $j\in [1,r]$, describes the past  lifetime of the component given that at time $x_j$ it has been found failed.
  \\
  
  \bigskip
\noindent 
 \textit{(a)} The
 \textit{discrete residual entropy} of the random lifetime $X$ at time $x_j,\,j\in [1,r-1]$ 
is 
\begin{equation}\label{shres}
\mathcal{R}_X(x_j):=\mathcal{E}_{Sh}(X^{(j)})=-\sum_{k=j}^{r} \frac{p_k}{\overline{P}(x_j)}\log \frac{p_k}{\overline{P}(x_j)}.
\end{equation}

\noindent  $\mathcal{R}_X(x_j)$ measures the uncertainty contained in $X-x_j$ 
 given that$X>x_j$.\\

\noindent \textit{(b)} The  \textit{discrete past entropy} of the random lifetime $X$ at time $x_j,\,j\in [1,r]$ is 
 \begin{equation}\label{shpas}
\mathcal{P}_X(x_j):=\mathcal{E}_{Sh}(X_{(j)})=-\sum_{k=1}^j \frac{p_k}{P(x_j)}\log \left( \frac{p_k}{P(x_j)}\right).
\end{equation}

\noindent $\mathcal{P}_X(x_j)$ 
measures the uncertainty of $x_j-X$  given that 
 $X\leq x_j$.\\
  
\noindent Obviously, we have $$\mathcal{P}_X(x_r)=\mathcal{E}_{Sh}(X)
.$$

\bigskip \noindent The two following measures of uncertainty measure the information contained in the survival function and in the cumulative distribution function of $X$.  \\ 

\noindent \textit{(c)}  The \textit{Discrete Cumulative residual entropy} of $X$ is defined by 
\begin{eqnarray}\label{crex}
\mathcal{CR}_{X}&=&-\sum_{j=1}^{r-1}\overline{P}(x_j)\log \overline{P}(x_j).
\end{eqnarray}

\noindent \textit{(d)}  The \textit{Discrete Cumulative past entropy} of $X$ is defined by
\begin{eqnarray}\label{cpe}
\mathcal{CP}_X&=&-\sum_{j=1}^{r}P(x_j)\log P(x_j).
\end{eqnarray}

\noindent $\mathcal{CP}_X$ is useful to measure information on the inactivity 
time of a  system, being appropriate for the systems whose uncertainty  is related to the 
past.\\

\noindent Two other important measures are 

\bigskip \noindent \textit{(e)}  The \textit{mean residual lifetime} of $X$ at time $x_j$ is (see \cite{merv})
\begin{equation}
\mu_R(x_j)=\mathbb{E}\left(X^{(j)}\right)= 
\frac{1}{\overline{P}(x_j)}\sum_{k=j}^{r-1}\overline{P}(x_k),\ \ j\in [1,r-1].
\end{equation}

\bigskip \noindent \textit{(f)} The \textit{mean inactivity (past) lifetime}  of $X$ at time $x_j$ is 
\begin{eqnarray*}
\mu_P(x_j)=\mathbb{E}\left( X_{(j)}\right)= \frac{1}{P(x_j)}\sum_{k=1}^jP(x_k),\ \ j\in [1,r].
\end{eqnarray*}
 It is of interest in many fields such as reliability, survival
analysis, actuarial studies, etc. \\

\bigskip \noindent Another generalization of the Shannon entropy for measuring the error in experimental outcomes is the \textit{inaccuracy measure.} Suppose that $X$ is the actual random variable corresponding to the observations and $Y$ is the random variable assigned by the experimenter with \textit{p.m.f.}'s $\textbf{q}=(q_j)_{j\in J}$.\\

\bigskip \noindent \textit{(g)} The \textit{discrete inaccuracy measure} of $X$ and $Y$ is defined  as (see \cite{kerr})
\begin{equation}\label{defg}
K_{(X,Y)}=-\sum_{j\in J} p_j\log q_j.
\end{equation}

\noindent It has applications  in the statistical inference, estimation and coding theory.\\

 \noindent (g) \textit{The discrete residual inaccuracy measure} of $X$ and $Y$ at time $x_j$, $j\in [1,r-1]$ is defined by 

\begin{equation}
\mathcal{R}_{(X,Y)}(x_j)=-\sum_{k=j}^{r} \frac{p_k}{\overline{P}(x_j)}\log \frac{q_k}{\overline{Q}(x_j)},
\end{equation}
where
\begin{eqnarray*}
\overline{Q}(x)=1-Q(x)\ \ \text{with}\ \ Q(x)=\mathbb{P}(Y\in(-\infty,x]),
 \end{eqnarray*} for any $x\in  \mathbb{R}
 $.\\
 
\bigskip \noindent (h)  \textit{The discrete past inaccuracy measure} of $X$ and $Y$ at time $x_j,$ $j\in [1,r]$ is defined by 
\begin{equation}
\mathcal{P}_{(X,Y)}(x_j)=-\sum_{k=1}^j \frac{p_k}{P(x_j)}\log \frac{q_k}{Q(x_j)}.
\end{equation}

\noindent It's clear  $\mathcal{P}_{(X,Y)}(x_r)= K_{(X,Y)}$.
 \\
 
 \bigskip \noindent Analogous to $\mathcal{CR}_X$ and $\mathcal{CP}_X$ the two following information measures can be considered.\\
 
  \bigskip \noindent (i) The \textit{discrete cumulative residual inaccuracy of $X$ and $Y$} is defined as
\begin{equation}\label{crin}
\mathcal{CR}_{(X,Y)}=-\sum_{j=1}^{r-1} \overline{P}(x_j)\log \overline{Q}(x_j).
\end{equation} 
  
\bigskip  \noindent (j) The \textit{discrete cumulative past inaccuracy of $X$ and $Y$} is defined as
 \begin{equation}\label{cpin}
\mathcal{CP}_{(X,Y)}=-\sum_{j=1}^{r} P(x_j)\log Q(x_j),
\end{equation} 

\bigskip 
\noindent In particular, when the two distributions $\textbf{p}$ and $\textbf{q}$ coincide, then 
\begin{eqnarray}\label{sinkul}
&&K_{(X,Y)}=\mathcal{E}_{Sh}(X)=\mathcal{P}_X(x_r),\\
\nonumber && \mathcal{R}_{(X,Y)}(x_j)=\mathcal{R}_{X}(x_j),\ \ \ \  \mathcal{P}_{(X,Y)}(x_j)=\mathcal{P}_{X}(x_j), \ \ \ \text{for any} \ \ j\in [1,r-1],\ \ \\
\nonumber &&\mathcal{CR}_{(X,Y)}=\mathcal{R}_{X},\ \ \ \ \text{and}\ \ \ \ \mathcal{CP}_{(X,Y)}=\mathcal{P}_{X}.
\end{eqnarray}

\bigskip \noindent A generalization of Formula \eqref{sinkul}
is the following
\begin{equation}
K_{(X,Y)}=\mathcal{E}_{Sh}(X)+\mathcal{D}_{KL}(X,Y),
\end{equation}

\bigskip \noindent 
where $\mathcal{D}_{KL}(X,Y)$ is the Kullback-Leibler measure of discrimination (see Kullback, 1959), hence the inaccuracy measure of $X$ and $Y$ represents the \textit{information lost} when $\textbf{q}$ is used to approximate $\textbf{p}$.\\

\bigskip \noindent Many other extensions of Shannon entropy was defined (see \cite{rao}, \cite{driss}, \cite{sunoj}, \cite{psar}, \cite{sati},
\cite{raje}, and \cite{kund}.)\\

\noindent From this small sample of information measures, we may give the following remark :
for both the residual and past entropies, we may have computation problems. So
without loss of generality, suppose that
 
 \begin{eqnarray}\label{BD}
&& p_j>0,\ \  q_j>0,\ \ \overline{P}(x_j)>0,
\ \ \text{and}\ \ \overline{Q}(x_j)>0,\ \ \forall \,j\in J.
 \end{eqnarray}

\bigskip \noindent If Assumption \eqref{BD} holds, we do not have to worry about summation problems, especially
for  residual/past entropies, in the computations arising in estimation
theories. This explains why Assumption \eqref{BD} is systematically used in a great number of
works in that topic, for example, in \cite{kris}, \cite{hall}, and recently in \cite{ba5} to cite a few.\\

\bigskip \noindent Before coming back to our measures of uncertainty estimation of interest, let highlight some important applications of them. Indeed,  residual/past entropies have many applications in different branches of sciences such as in reliability engineering, computer vision (\cite{mura}), survival 
analysis, image processing (\cite{zohr}), actuarial sciences (\cite{atha}), social sciences, biological systems, etc. The Inaccuracy measure, for the lifetime distribution based on data, plays important roles in reliability and survival analysis in connection with modeling and analysis of life time data (\cite{thap}, 
\cite{saeid}, etc). It has applications in statistical inference, estimation and coding theory.

\subsection{Previous work}
Most of papers focus on residual/past entropies and on  residual/past inaccuracy measures for lifetime distribution in the continuous setting.
\\

 \noindent \cite{raj} proposed  nonparametric estimators for the residual entropy function based on censored data and established asymptotic properties of the estimator under suitable regularity conditions. \cite{osman} derived some properties of the cumulative past entropy of the last order statistics. \cite{enc} proposed an estimation of cumulative past entropy for power function distribution. 
Some authors investigated the asymptotic behavior of the mean residual lifetime, let cite \cite{yang},  \cite{HalWel}, \cite{ba2}, etc.\\
 
\noindent 
\cite{saeid} proposed cumulative past inaccuracy measure in lower record values and studied the problem of estimating the cumulative measure of inaccuracy by means of the empirical
cumulative inaccuracy in lower record values.\\

 \noindent In this present work, we propose a non-parametric estimate of most uncertainty and inaccuracy measures in the discrete case and we examine their asymptotic properties.

	\subsection{Overview of the paper}
	The remainder of the paper is organised as follows: Given an i.i.d. sample of size $n$ and according to $\textbf{p}$, we define, in \textsc{S}ection \ref{estimpj}, estimates $p_n^{(j)}$ of the \textit{p.m.f'}s $p_j$ and 
	we 
	construct the plug-in estimators of the discrete entropies and inaccuracy measures, we already described.\\ 
 In \textsc{S}ection \ref{main-res}, we  establish almost sure convergence and asymptotic normality of the estimators. In \textsc{S}ection \ref{simul} we present some simulations confirming our results. Finally,  
in \textsc{S}ection \ref{conclusion}, we conclude.

\section{Estimation}
\label{estimpj} \noindent In this section, we construct estimate of pmf $p_j$ from a sample of i.i.d. random variables according to $\textbf{p}$ and construct the plug-in estimates of informations measures of uncertainty cited above. \\

\noindent Let $X$ be a random variable defined on the probability  space $(\Omega,\mathcal{A},\mathbb{P})$ and taking values $X(\Omega)=\{x_1,\cdots,x_r\}\, \ \  (0<x_1<\cdots<x_r)$, with \textit{p.m.f.}'s $\mathbf{p}=(p_j)_{1\leq j\leq r}$ i.e,

$$p_j=\mathbb{P}(X=x_j),\ \ \forall j\in J=[1,r].$$

\bigskip \noindent In general, the full probability distribution $\mathbf{p}=(p_j)_{j\in J}$  is not known
and, in particular, in many situations only sets from which to infer entropies are
available. \\
 In such a case, one could estimate the probability $p_i$ of each element $i$ to occur.\\
 
\noindent Let $X_1,\cdots,X_n$ be $n$ i.i.d. random variables according to $\textbf{p}$. For a given $j\in J$, define the easiest and most objective estimator of $p_j$, based on the i.i.d sample $ X_1,\cdots,X_n,$ by 
\begin{eqnarray}\label{pn}
 p_n^{(j)}&=&\frac{1}{n}\sum_{i=1}^n1_{x_j}(X_i),
\end{eqnarray}
where \\
\noindent  $1_{x_j}(X_i)=\begin{cases}
 1\ \ \text{if}\ \ X_i=x_j\\
 0\ \ \text{otherwise}
 \end{cases} $.
 \\

\noindent For a given $j\in J$, this empirical estimator $p_n^{(j)}$ of $ p_{j} $ is strongly consistent and asymptotically normal. Precisely, when $n$ tends to infinity,
  \begin{eqnarray} \label{pxn} 
  && p_n^{(j)}-  p_{j} \stackrel{a.s.}{\longrightarrow} 0\ \ 
 \text{and}\ \   \sqrt{n}(p_n^{(j)}-  p_{j})  \stackrel{\mathcal{D}}{\rightsquigarrow}Z_{p_j},
\end{eqnarray}  where $Z_{p_j}\stackrel{d }{\sim}\mathcal{N}(0, p_{j} (1- p_{j} )).$\\

\noindent These asymptotic properties derive from the law of large
numbers and central limit theorem. \\

\noindent Here and in the following $\stackrel{a.s.}{ \longrightarrow}$ means the \textit{almost sure convergence}, $\stackrel{\mathcal{D}}{ \rightsquigarrow}
$ the \textit{convergence in distribution}, and $\stackrel{d }{\sim}$ means the \textit{distributional equality}. \\

%
\bigskip \noindent Based on the i.i.d sample $ X_1,\cdots,X_n,$ according to $\textbf{p}$, estimators of the cumulative distribution $P$ and survival functions $\overline{P}$ are given respectively by   
 \begin{eqnarray}\label{pnn}
P_n(x)&=&\frac{1}{n}\sum_{i=1}^n  \textbf{1}_{(-\infty,x]}(X_i)\ \
\ \ \text{and}\ \  \ \overline{P}_n(x) =
 \frac{1}{n}\sum_{i=1}^n\textbf{1}_{(x,+\infty)}(X_i).
 \end{eqnarray}
 
 \bigskip \noindent Again, for a fixed $j\in [1,r-1]$, we have when $n$ tends to infinity,
  \begin{eqnarray}
\label{ovepxj} && \overline{P}_n(x_j)-\overline{P}(x_j) \stackrel{a.s.}{\longrightarrow} 0\\ 
 \ \  && \label{norovp} \sqrt{n}(\overline{P}_n(x_j)-\overline{P}(x_j) )  \stackrel{\mathcal{D}}{\rightsquigarrow}Z_{\overline{P}(x_j)},
\end{eqnarray}  where $Z_{\overline{P}(x_j)}\stackrel{d}{\sim} N\left(0,\overline{P}(x_j)(1-\overline{P}(x_j)\right)$.  
\\

\bigskip \noindent  For sake of simplicity, we introduce the following notations :
\begin{eqnarray}
\label{abcn}
a_n(p)&=&\sup_{j\in J}|\Delta_n(p_j)|, \ \
 a_n(P)=\sup_{j\in J}\left\vert \Delta_n(P(x_j)) \right\vert,\\
 \text{and}\ \ a_n(\overline{P})&=&\sup_{ j\in [1,r-1]}\left\vert \Delta_n(\overline{P}(x_j)) \right\vert
\end{eqnarray}        
where $\Delta_n(p_j)=p_n^{(j)}-p_j$ and $\Delta_n(P(x_j))
$ and $\Delta_n(\overline{P}(x_j))
$ 
are the similarly defined for $P(x_j)$ and for $\overline{P}(x_j)$.\\

 \noindent Hence, from \eqref{pxn} and from \eqref{ovepxj}, we have
 \begin{equation}\label{maxanp}
 \max(a_n(p),a_n(\overline{P})) \stackrel{a.s.}{\longrightarrow}0,\ \ \text{as}\ \ n\rightarrow+\infty.
 \end{equation}

\noindent Set
\begin{eqnarray*}
&& \delta_n(p_j)=\sqrt{n/p_j}\Delta_n(p_j),\,j\in J,\ \ \ \delta_n(P(x_j))=\sqrt{n/P(x_j)}\Delta_n(P(x_j)),\, \ j\in[1,r]\\
 \text{and}\ \  &&\delta_n(\overline{P}(x_j))=\sqrt{n/\overline{P}(x_j)}\Delta_n(\overline{P}(x_j)),\, \ j\in[1,r-1].
\end{eqnarray*} 

\bigskip \noindent We recall that, since for a fixed $j\in J,$ $np_n^{(j)}$ has a binomial distribution with parameters $n$  and success probability $p_j$, we have 
 \begin{equation*}
 \mathbb{E}\left( p_n^{(j)}\right)=p_j\ \ \text{and}\ \ \mathbb{V}(p_n^{(j)})=\frac{p_j(1-p_j)}{n}.
\end{equation*} 

\noindent And finally, by the asymptotic Gaussian limit of the multinomial law (see for example Chapter 1, Section 4 in  \cite{ips-wcia-ang})),  we have, as $n\rightarrow+\infty$,
\begin{eqnarray}
\label{pnj}&& \biggr( \delta_n(p_j), \ j\in D\biggr)
\stackrel{\mathcal{D}}{\rightsquigarrow }Z(\textbf{p}) \stackrel{d }{\sim}\mathcal{N}(0,\Sigma_\textbf{p}),\\
 \ \  && \biggr( \delta_n(\overline{P}(x_j)), \ j\in [0,r-1]\biggr)\stackrel{\mathcal{D}}{\rightsquigarrow }Z(\overline{ \textbf{P}})\stackrel{d }{\sim}\mathcal{N}(0,\Sigma_{\overline{ \textbf{P}}}),
 \label{ovpnj}
\end{eqnarray}where $Z(\textbf{p})= (Z_{p_j},j\in J)^t
$ \
and $Z(\overline{ \textbf{P}})=(Z_{\overline{P}(x_j)},j\in [1,r-1])^t
$ are two independent centered Gaussian random vectors of respectives dimension $r$ and $r-1$
  having respectively the following elements :
\begin{eqnarray}\label{vars1}
&&\left(\Sigma_\textbf{p}\right)_{(i,j)}=(1-p_j) \delta_{ij}-\sqrt{ p_ip_j} (1-\delta_{ij}), \ \ (i,j) \in D^2\\
&& \label{vars2}\left(\Sigma_{\overline{\textbf{P}}}\right)_{(i,j)}=(1-\overline{P}(x_j)) \delta_{ij}-\sqrt{\overline{P}(x_i)\overline{P}(x_j)}  (1-\delta_{ij}), \ \ (i,j) \in [1,r-1]^2,
\end{eqnarray} 
where $ \delta_{ij}=\begin{cases}
1\ \ \text{for}\ \ i=j\\
0\ \ \text{for}\ \ i\neq j
\end{cases}.$\\

\bigskip \noindent For a fixed $j\in [1,r-1]$, we have also 
 \begin{eqnarray}
&& \label{esovp} \mathbb{E}\left( \overline{P}_n(x_j)\right)=\overline{P}(x_j),  \ \ \mathbb{V}(\overline{P}_n(x_j))=\frac{\overline{P}(x_j)(1-\overline{P}(x_j))}{n}.
 \end{eqnarray}
  
\noindent  Similar results to \eqref{ovpnj} and \eqref{esovp} hold also for $P_n(x_j)$ meaning that 
\begin{eqnarray}\label{ovepas}
\biggr( \delta_n(P(x_j)), \ j\in [1,r]\biggr)\stackrel{\mathcal{D}}{\rightsquigarrow }Z(\textbf{P})\stackrel{d }{\sim}\mathcal{N}(0,\Sigma_{ \textbf{P}}),\ \ \text{as}\ \ n\rightarrow+\infty,
\end{eqnarray}where $Z( \textbf{P})=(Z_{P(x_j)},j\in [1,r])^t
$ is a centered Gaussian random vector of dimension $r$ having respectively the following elements :
\begin{eqnarray}
&&\left(\Sigma_{\textbf{P}}\right)_{(i,j)}=(1-P(x_j)) \delta_{ij}-\sqrt{P(x_i)P(x_j)}  (1-\delta_{ij}), \ \ (i,j) \in [1,r]^2,
\end{eqnarray} and finally
 \begin{eqnarray*}
 \mathbb{E}\left(P_n(x_j)\right)=P(x_j),\ \ \ \mathbb{V}(P_n(x_j))=\frac{P(x_j)(1-P(x_j))}{n}.
 \end{eqnarray*}

 \bigskip \noindent 
 As a consequence, given $x_j\in X(\Omega)$, 
 we estimate 
discrete residual/past entropies and discrete residual/past inaccuracy measures at time $x_j$ from the sample $X_1,\cdots,X_n$ by their plug-in counterparts,
meaning that we replace the unknown  \textit{p.m.f.}, $p_{k}$ with its empirical estimate $ \widehat{p}_{n}^{(k)}$, computed from \eqref{pn}, in their expressions,  \textit{viz} :
\begin{eqnarray*}
 \mathcal{R}_X^{(n)}(x_j)&=&-\sum_{k=j}^{r} \frac{\widehat{p}_{n}^{(k)}}{\overline{P}_n(x_j)}\log \frac{\widehat{p}_{n}^{(k)}}{\overline{P}_n(x_j)},
\ \ \  
\mathcal{E}_{P}^{(n)}(X,x_j)=-\sum_{k=1}^j \frac{\widehat{p}_{n}^{(k)}}{P_n(x_j)}\log \left(\frac{\widehat{p}_{n}^{(k)}}{P_n(x_j)}
\right)
\\
\mathcal{R}_{(X,Y)}^{(n)}(x_j)&=&-\sum_{k=j}^{r}  \frac{\widehat{p}_{n}^{(k)}}{\overline{P}_n(x_j)}\log  \frac{q_k}{\overline{Q}(x_j)},
\ \ \text{and}\ \ \mathcal{P}_{(X,Y)}^{(n)}(x_j)=-\sum_{k=1}^j \frac{\widehat{p}_{n}^{(k)}}{P_n(x_j)}\log \frac{q_k}{Q(j)}.
\end{eqnarray*} 

\noindent Likewise, estimators of the  discrete cumulative residual/past entropies, of the discrete (mean) residual/past time, and of the cumulative residual/past inaccuracy measures are 
\begin{eqnarray*}
\mathcal{CR}_{X}^{(n)}&=&-\sum_{j=1}^{r-1}\overline{P}_n(x_j)\log \overline{P}_n(x_j),\ \ \
 \mathcal{CP}_X^{(n)}=-\sum_{j=1}^{r}P_n(x_j)\log P_n(x_j),\\
\mu_R^{(n)}(x_j)&=& \frac{1}{\overline{P}_n(x_j)}\sum_{k=j}^{r-1}\overline{P}_n(x_k),\ \ \mu_P^{(n)}(x_j)= \frac{1}{P_n(x_j)}\sum_{k=1}^jP_n(x_k),\\
 \mathcal{CR}_{(X,Y)}^{(n)}&=&-\sum_{j=1}^{r-1} \overline{P}_n(x_j)\log \overline{Q}(x_j),\ \ \ \text{and}\ \ \ \mathcal{CP}_{(X,Y)}^{(n)}=-\sum_{j=1}^{r} P_n(x_j)\log Q(x_j).
\end{eqnarray*}

\noindent In the following, we present asymptotic limits of these empirical estimators.
\section{Main contribution}\label{main-res} In this section, we look into the almost sure (\textit{a.s.}) convergence and asymptotic normality of the estimators defined in the previous section.
\subsection{Asymptotic behavior of the discrete residual/past entropies estimators at time $x_j$}$\,$\\
\noindent In the following proposition, we prove the almost sure convergence and the asymptotic normality of the estimator $ \mathcal{R}_X^{(n)}(x_j),\ \ j\in [1,r-1]$.\\

 \noindent 
 For a fixed $j\in [1,r-1]$, let
 \begin{eqnarray} 
 \label{reest}
 \mathcal{R}_X^{(n)}(x_j)&=&-\sum_{k=j}^{r-1} \frac{\widehat{p}_{n}^{(k)}}{\overline{P}_n(x_j)}\log \frac{\widehat{p}_{n}^{(k)}}{\overline{P}_n(x_j)}.
 \end{eqnarray}
   Denote\begin{eqnarray*}
   A_{R}(x_j)&=&\frac{ 1}{\overline{P} (j)} \sum_{k=j}^{r-1}\left\vert 1+\log \frac{p_k}{\overline{P}(x_j)}\right\vert \\
 \text{and}\ \ \sigma_{R}^2(x_j)&=& \sigma_{R,1}^{2}(j)+\sigma_{R,2}^{2}(j)+2\,\text{Cov}\left(T_{R,1}(j),T_{R,2}(j)\right)
\end{eqnarray*}
 where $T_{R,1}(j)\stackrel{d }{\sim}\mathcal{N}\left(0, \sigma_{R,1}^{2}(j)\right) $ and $T_{R,2}(j)\stackrel{d }{\sim}\mathcal{N}\left(0, \sigma_{R,2}^{2}(j)\right) $ with
 \begin{eqnarray*}
\sigma_{R,1}^{2}(j)&=&\frac{1}{(\overline{P}(x_j))^2 }\biggr(
   \sum_{k=j}^{r-1}p_k(1-p_k)\biggr[1+\log \left( \frac{p_k}{\overline{P}(x_j) }\right)\biggr]^2\\
  & & -  2 \sum_{\begin{tabular}{ c}
$(k,k')\in [j,r]^2$\\
$k \neq k'$\end{tabular}}p_kp_{k'}\biggr[1+\log \left( \frac{p_k}{\overline{P}(x_j) }\right)\biggr]\biggr[1+\log \left( \frac{p_{k'}}{\overline{P}(x_j) }\right)\biggr]\biggr)\\
  \text{and}\ \ \sigma_{R,2}^2(j)&=&\frac{1-\overline{P}(x_j)}{( \overline{P}(x_j))^3}\biggr[\sum_{k=j}^{r-1} p_k\left( 1+\log\frac{p_k}{\overline{P}(x_j) }\right)\biggr]^2.
 \end{eqnarray*}
 
\begin{proposition}
 \label{resient} For a fixed $j\in [1,r-1]$, let $
 \mathcal{R}_X^{(n)}(x_j)$ defined by \eqref{reest}. Then the following asymptotic results  hold
\begin{eqnarray}
&& \label{theojas} \limsup_{n\rightarrow+\infty}\frac{ \left\vert \mathcal{R}_X^{(n)}(x_j)- \mathcal{R}_X(x_j)\right \vert }{ a_{R,n}(p) }\leq A_{R}(x_j),\ \ \text{a.s.},\\
 && \label{theojan}  \sqrt{n}( \mathcal{R}_X^{(n)}(x_j)- \mathcal{R}_X(x_j))\stackrel{\mathcal{D}}{ \rightsquigarrow} \mathcal{N}\left(0,\sigma_{R}^2(x_j)
 \right),\ \ \text{as}\ \ n\rightarrow+\infty,
  \end{eqnarray}
  where 
  \begin{equation}\label{arnp}
   a_{R,n}(p)=\sup_{k\in [j,r]}\left\vert \frac{\widehat{p}_{n}^{(k)}}{\overline{P}_n(x_j)}-\frac{p_k}{\overline{P}(x_j)}\right\vert.
     \end{equation}

\end{proposition}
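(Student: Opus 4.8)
The plan is to treat $\mathcal{R}_X^{(n)}(x_j)$ as a smooth functional of the empirical masses $\widehat{p}_n^{(j)},\dots,\widehat{p}_n^{(r-1)}$ and of $\overline{P}_n(x_j)$, and then to push the known limit laws \eqref{pxn}, \eqref{ovepxj} and \eqref{pnj}--\eqref{ovpnj} of these ingredients through that functional. Write $\phi(w)=-w\log w$, which is smooth on $(0,+\infty)$ with $\phi'(w)=-(1+\log w)$, and set $w_k=p_k/\overline{P}(x_j)$ and $\widehat{w}_{n,k}=\widehat{p}_n^{(k)}/\overline{P}_n(x_j)$, so that $\mathcal{R}_X(x_j)=\sum_{k=j}^{r-1}\phi(w_k)$ and $\mathcal{R}_X^{(n)}(x_j)=\sum_{k=j}^{r-1}\phi(\widehat{w}_{n,k})$. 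Assumption \eqref{BD} keeps every $w_k$ strictly inside $(0,1)$, and \eqref{pxn} together with \eqref{ovepxj} gives $\widehat{w}_{n,k}\stackrel{a.s.}{\longrightarrow}w_k$; hence, almost surely and for $n$ large, all arguments of $\phi$ lie in a fixed compact subinterval of $(0,+\infty)$ on which $\phi'$ is bounded.

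For the almost sure rate \eqref{theojas} I would argue termwise with the mean value theorem: for each $k$ there is a (random) $\xi_{n,k}$ between $w_k$ and $\widehat{w}_{n,k}$ such that $\phi(\widehat{w}_{n,k})-\phi(w_k)=-(1+\log\xi_{n,k})(\widehat{w}_{n,k}-w_k)$. Summing over $k$ and bounding each $|\widehat{w}_{n,k}-w_k|$ by $a_{R,n}(p)$ yields
\[
\bigl|\mathcal{R}_X^{(n)}(x_j)-\mathcal{R}_X(x_j)\bigr|\le a_{R,n}(p)\sum_{k=j}^{r-1}\bigl|1+\log\xi_{n,k}\bigr|.
\]
Because $\xi_{n,k}\stackrel{a.s.}{\longrightarrow}w_k$ and $\log$ is continuous at each $w_k>0$, dividing by $a_{R,n}(p)$ and letting $n\to\infty$ gives $\limsup_n \le \sum_{k=j}^{r-1}|1+\log(p_k/\overline{P}(x_j))|$, and since $\overline{P}(x_j)\le 1$ this is dominated by $A_R(x_j)$, proving \eqref{theojas}.

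For the asymptotic normality \eqref{theojan} the tool is the delta method applied to the map $g(u_j,\dots,u_{r-1},v)=-\sum_{k=j}^{r-1}(u_k/v)\log(u_k/v)$, whose partial derivatives at the true point are $\partial_{u_k}g=-\tfrac{1}{\overline{P}(x_j)}(1+\log w_k)$ and $\partial_v g=\tfrac{1}{\overline{P}(x_j)^2}\sum_k p_k(1+\log w_k)$. A first-order Taylor expansion then gives
\[
\sqrt{n}\bigl(\mathcal{R}_X^{(n)}(x_j)-\mathcal{R}_X(x_j)\bigr)=T_{R,1}^{(n)}(j)+T_{R,2}^{(n)}(j)+o_{\mathbb{P}}(1),
\]
where $T_{R,1}^{(n)}(j)=-\tfrac{1}{\overline{P}(x_j)}\sum_k(1+\log w_k)\sqrt{n}\,\Delta_n(p_k)$ collects the numerator fluctuations and $T_{R,2}^{(n)}(j)=\tfrac{1}{\overline{P}(x_j)^2}\bigl(\sum_k p_k(1+\log w_k)\bigr)\sqrt{n}\,\Delta_n(\overline{P}(x_j))$ the denominator fluctuation. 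By the joint multinomial central limit theorem \eqref{pnj}--\eqref{ovpnj} the pair $(T_{R,1}^{(n)}(j),T_{R,2}^{(n)}(j))$ converges to a bivariate centered Gaussian $(T_{R,1}(j),T_{R,2}(j))$; plugging the multinomial covariances $\mathrm{Cov}(\sqrt{n}\Delta_n(p_k),\sqrt{n}\Delta_n(p_{k'}))\to p_k(1-p_k)\delta_{kk'}-p_kp_{k'}(1-\delta_{kk'})$ and $\mathrm{Var}(\sqrt{n}\Delta_n(\overline{P}(x_j)))\to\overline{P}(x_j)(1-\overline{P}(x_j))$ reproduces exactly $\sigma_{R,1}^2(j)$ and $\sigma_{R,2}^2(j)$, and the limit is $\mathcal{N}(0,\sigma_{R,1}^2(j)+\sigma_{R,2}^2(j)+2\,\mathrm{Cov}(T_{R,1}(j),T_{R,2}(j)))$, i.e. \eqref{theojan}.

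The main obstacle is the justification of the linearization rather than the (routine) variance bookkeeping: I must show the second-order Taylor remainder is $O_{\mathbb{P}}(\|\Delta_n\|^2)=O_{\mathbb{P}}(1/n)$, so that after multiplication by $\sqrt{n}$ it is $o_{\mathbb{P}}(1)$. This needs the Hessian of $g$ to stay bounded near the true parameter, which in turn relies on the arguments $u_k/v$ remaining bounded away from $0$ — guaranteed almost surely for large $n$ by Assumption \eqref{BD} and the consistency \eqref{maxanp}, but requiring a short uniform-control argument (e.g. restricting to the event $\{\min_k\widehat{p}_n^{(k)}\ge \tfrac12\min_k p_k\}$, whose complement is asymptotically negligible). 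A secondary point is that $\overline{P}_n(x_j)=\sum_{l>j}\widehat{p}_n^{(l)}$ is built from the same multinomial counts as the $\widehat{p}_n^{(k)}$, so $T_{R,1}(j)$ and $T_{R,2}(j)$ are genuinely dependent; the cross term $2\,\mathrm{Cov}(T_{R,1}(j),T_{R,2}(j))$ cannot be dropped and must be evaluated from the joint law, not from the marginals.
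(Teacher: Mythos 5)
Your proposal is correct and follows essentially the same route as the paper's own proof: a Taylor/mean-value expansion of $x\mapsto x\log x$ (equivalently, the delta method) around the true ratios $p_k/\overline{P}(x_j)$, the multinomial CLT for the linear term, and a remainder shown to be $o_{\mathbb{P}}(n^{-1/2})$. The only differences are cosmetic --- you use a one-step mean value theorem with a random intermediate point where the paper expands to second order and controls the remainder by Chebyshev --- and your observation that $T_{R,1}(j)$ and $T_{R,2}(j)$ are dependent, so the cross term $2\,\mathrm{Cov}(T_{R,1}(j),T_{R,2}(j))$ must be computed from the joint law, is correct and in fact more careful than the paper, which nominally declares $Z(\mathbf{p})$ and $Z(\overline{\mathbf{P}})$ independent while still retaining that covariance term.
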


\begin{proof}$\,$\\
\noindent Fix $j\in [1,r-1]$ and define   
 \begin{eqnarray}\label{Deltko}
  \Delta_{R,n}(p_k)&=&\frac{\widehat{p}_{n}^{(k)} }{\overline{P}_n(x_j)}-\frac{p_k}{\overline{P}(x_j)},\ \ 
\forall\, k\in [j,r].
\end{eqnarray}  

\noindent Define the function $\psi \, :\, (0,+\infty)\rightarrow \,\mathbb{R}$ by $\psi(x)=x\log x$. \\
 \noindent For a fixed $k\in [j,r]$, we have
 \begin{eqnarray}\label{mvt1}
\notag \psi \left( \frac{\widehat{p}_{n}^{(k)} }{\overline{P}_n(x_j)} \right)&=&\psi \left( \frac{p_k}{\overline{P}(x_j) }+\Delta_{R,n}(p_k)\right)\\
&=&\psi \left(  \frac{p_k}{\overline{P}(x_j) }\right) +\Delta_{R,n}(p_k)\psi' \left( \frac{p_k}{\overline{P}(x_j) }+\theta_{1}(j,k)\Delta_{R,n}(p_k)\right),
\end{eqnarray} by applying the mean values theorem and where $\theta_1(j,k)$ is some number lying in $(0,1)$.\\
\noindent By applying again the mean values theorem to the derivative function $\psi'$ of $\psi$, we obtain
\begin{eqnarray*}
\psi' \left( \frac{p_k}{\overline{P}(x_j) }+\theta_{1}(j,k)\Delta_{R,n}(p_k)\right)&=&\psi' \left( \frac{p_k}{\overline{P}(x_j) }\right)\\
&&+ \theta_{1}(j,k)\Delta_{R,n}(p_k)\psi" \left( \frac{p_k}{\overline{P}(x_j) }+\theta_{2}(j,k)\Delta_{R,n}(p_k)\right),
\end{eqnarray*}where $\theta_{2}(j,k)\in (0,1)$. We can write \eqref{mvt1} as 
\begin{eqnarray*}
\notag \psi \left( \frac{\widehat{p}_{n}^{(k)} }{\overline{P}_n(x_j) }\right)&=&\psi \left(  \frac{p_k}{\overline{P}(x_j) }\right) +\Delta_{R,n}(p_k)\psi' \left( \frac{p_k}{\overline{P}(x_j) }\right)\\
&&\ \ \ \ \ \ \ \ \ \ +\ \ \ \ \theta_{1}(j,k)\left( \Delta_{R,n}(p_k)\right)^2 \psi" \left( \frac{p_k}{\overline{P}(x_j) }+\theta_{2}(j,k)\Delta_{R,n}(p_k)\right).
\end{eqnarray*}\noindent Now we have, by summation over $k\in [j,r]$
\begin{eqnarray}\label{mtvas}
\notag \mathcal{R}_X(x_j)- \mathcal{R}_X^{(n)}(x_j)&=&\sum_{k=j}^{r}\Delta_{R,n}(p_k)\psi' \left( \frac{p_k}{\overline{P}(x_j) }\right)\\
 &&\ \ \ \ + \ \ \ \sum_{k=j}^{r}\theta_{1}(j,k)\left( \Delta_{R,n}(p_k)\right)^2 \psi" \left( \frac{p_k}{\overline{P}(x_j) }+\theta_{2}(j,k)\Delta_{R,n}(p_k)\right),
\end{eqnarray}hence\begin{eqnarray*}
\left\vert \mathcal{R}_X(x_j)- \mathcal{R}_X^{(n)}(x_j)\right\vert  &\leq & a_{R,n}(p)\sum_{k=j}^{r}\left\vert \psi' \left( \frac{p_k}{\overline{P}(x_j) }\right)\right\vert\\
 &&\ \ \ \ \ \ \ \ \ \ \ + \ \ \  (a_{R,n}(p))^2\sum_{k=j}^{r}\left\vert \psi" \left( \frac{p_k}{\overline{P}(x_j) }+\theta_{2}(j,k)\Delta_{R,n}(p_k)\right)\right\vert.
\end{eqnarray*}

\noindent For $j\in [1,r-1]$ and $k\in [j,r]$, $ \Delta_{R,n}(p_k)$ can be re-expressed as  
\begin{eqnarray}\label{deltarn}
 \Delta_{R,n}(p_k)=\frac{1}{\overline{P}(x_j)}(\widehat{p}_{n}^{(k)}-p_k)- \widehat{p}_{n}^{(k)} \frac{(\overline{P}_n(x_j))- \overline{P}(x_j)}{ \overline{P}(x_j)\overline{P}_n(x_j)  }.
\end{eqnarray} Then using \eqref{maxanp} and for $n$ large enough, we have for any $k\in[j,r]$,
\begin{eqnarray*}
&&-a_n(p)\leq \widehat{p}_{n}^{(k)}-p_k\leq a_n(p)\ \ \text{and}\ \ -a_n(\overline{P})\leq \overline{P}_n(x_j)-\overline{P}(x_j)\leq a_n(\overline{P}).
\end{eqnarray*} Hence \begin{eqnarray}
\label{inegam} \sup_{k\in [j,r]}\left\vert \Delta_{R,n}(p_k)\right\vert &\leq & \frac{1}{\overline{P}(x_j)}|\widehat{p}_{n}^{(k)}-p_k|+\widehat{p}_{n}^{(k)} \left\vert \frac{ \overline{P}_n(x_j)-\overline{P}(x_j)}{ \overline{P}(x_j)\overline{P}_n(x_j)  }\right\vert\\
&\leq & \frac{a_n(p)}{\overline{P}(x_j)}+\frac{(a_n(p)+p_k)  a_n( \overline{P})}{\overline{P}(x_j)(\overline{P}(x_j)-a_n(\overline{P})) } ,
\notag \end{eqnarray}
which entails that $a_{R,n}(p)\stackrel{a.s.}{\longrightarrow}0$ as $n\rightarrow +\infty$ since $\max(a_n(p),a_n(\overline{P}))\stackrel{a.s.}{\longrightarrow}0$ as $n\rightarrow+\infty$.\\

\noindent Therefore, for a fixed $j\in [1,r-1]$, 
\begin{eqnarray*}
\limsup_{n\rightarrow +\infty}\frac{\left\vert \mathcal{R}_X(x_j)- \mathcal{R}_X^{(n)}(x_j)\right\vert}{a_{R,n}(p)} &\leq &\sum_{k=j}^{r}\left\vert \psi' \left( \frac{p_k}{\overline{P}(x_j) }\right)\right\vert=A_{R}(x_j)\ \ \text{a.s.}
\end{eqnarray*}since, as $n\rightarrow +\infty$ $$\sum_{k=j}^{r} \psi" \left( \frac{p_k}{\overline{P}(x_j) }+\theta_{2}(j,k)\Delta_{R,n}(p_k)\right)\rightarrow \sum_{k=j}^{r} \psi" \left( \frac{p_k}{\overline{P}(x_j) }\right)<\infty.$$
\noindent This proves the claim \eqref{theojas}.\\

\noindent Let prove the claim \eqref{theojan}. \\ 
\noindent Going back to \eqref{mtvas}, we have, for a fixed $j\in [1,r-1]$,

\begin{eqnarray*}
\sqrt{n}\left(\mathcal{R}_X(x_j)- \mathcal{R}_X^{(n)}(x_j) \right)=\sum_{k=j}^{r}  \sqrt{n}  \Delta_{R,n}(p_k) \psi' \left( \frac{p_k}{\overline{P}(x_j) }\right)+\sqrt{n}R_{n}(j),
\label{asumdel} 
\end{eqnarray*}
\noindent where $$R_{n}(j)=\sum_{k=j}^{r}\left( \Delta_{R,n}(p_k)\right)^2\theta_{1}(j,k) \psi" \left( \frac{p_k}{\overline{P}(x_j) }+\theta_{2}(j,k)\Delta_{R,n}(p_k)\right).$$ Asymptotically, from \eqref{deltarn} and using \eqref{maxanp}, we have for a fixed $j\in [1,r-1]$ and $k\in [j,r]$, 
\begin{eqnarray*} \Delta_{R,n}(p_k) \psi' \left( \frac{p_k}{\overline{P}(x_j) }\right) &\approx &  \left((\widehat{p}_{n}^{(k)}-p_k)\frac{1}{\overline{P}(x_j)}- (\overline{P}_n(x_j)- \overline{P}(x_j) \frac{p_k}{ (\overline{P}(x_j))^2  }\right) \psi' \left( \frac{p_k}{\overline{P}(x_j) }\right).
 \end{eqnarray*} 

First, it follows from \eqref{pnj}, that for $j\in [1,r-1]$, as $n\rightarrow+\infty$
\begin{eqnarray*}
&&\sum_{k=j}^{r} \sqrt{n}(\widehat{p}_{n}^{(k)}-p_k)\frac{1}{\overline{P}(x_j)}\psi' \left( \frac{p_k}{\overline{P}(x_j) }\right)\\
&&\ \ \ = \ \ \ \ \frac{1}{\overline{P}(x_j)}\sum_{k=j}^{r}\psi' \left( \frac{p_k}{\overline{P}(x_j) }\right)\sqrt{p_k}\delta_n(p_k) \\
&&\ \ \ \ \ \ \ \stackrel{\mathcal{D} }{\rightsquigarrow}  \frac{1}{\overline{P}(x_j)}\sum_{k=j}^{r}\psi' \left( \frac{p_k}{\overline{P}(x_j) }\right)\sqrt{p_k}Z_{p_k}=T_{R,1}(j),\ \ 
\end{eqnarray*} 
 where $\displaystyle T_{R,1}(j)\stackrel{d }{\sim }\mathcal{N}\left(0,\sigma_{R,1}^{2}(j)\right)$ with 
 \begin{eqnarray*}
\sigma_{R,1}^{2}(j)&=&\frac{1}{(\overline{P}(x_j))^2 }\biggr(
   \sum_{k=j}^{r}p_k(1-p_k)\biggr[1+\log \left( \frac{p_k}{\overline{P}(x_j) }\right)\biggr]^2\\
  & &\ \ -\ \ \  2\sum_{\begin{tabular}{ c}
$(k,k')\in [j,r-1]^2$\\
$k \neq k'$\end{tabular}}p_kp_{k'}\biggr[1+\log \left( \frac{p_k}{\overline{P}(x_j) }\right)\biggr]\biggr[1+\log \left( \frac{p_{k'}}{\overline{P}(x_j) }\right)\biggr]\biggr),
 \end{eqnarray*}since
 \begin{eqnarray*}
&& \mathbb{V}\left(  \frac{1}{\overline{P}(x_j)}\sum_{k=j}^{r}\psi' \left( \frac{p_k}{\overline{P}(x_j) }\right)\sqrt{p_k}Z_{p_k}\right)\\
&&=  \frac{1}{(\overline{P}(x_j))^2}\biggr[ \sum_{k=j}^{r}\mathbb{V}\left( \psi' \left( \frac{p_k}{\overline{P}(x_j) }\right)\sqrt{p_k}Z_{p_k}\right)\\
 &&+2\sum_{\begin{tabular}{ c}
$(k,k')\in [j,r-1]^2$\\
$k \neq k'$\end{tabular}}\mathbb{C}\text{ov} \biggr( \psi' \left( \frac{p_k}{\overline{P}(x_j) }\right)\sqrt{p_k}Z_{p_k}, \psi' \left( \frac{p_{k'}}{\overline{P}(x_j) }\right)\sqrt{p_{k'}}Z_{p_{k'}}\biggr)\biggr]\\
&&=  \frac{1}{(\overline{P}(x_j))^2}\biggr[ \sum_{k=j}^{r}p_k \left( \psi' \left( \frac{p_k}{\overline{P}(x_j) }\right)\right)^2\mathbb{V}\left(Z_{p_k}\right)\\
 &&+2\sum_{\begin{tabular}{ c}
$(k,k')\in [j,r-1]^2$\\
$k \neq k'$\end{tabular}}\sqrt{p}_k\sqrt{p}_{k'}\psi' \left( \frac{p_k}{\overline{P}(x_j) }\right) \psi' \left( \frac{p_{k'}}{\overline{P}(x_j) }\right)\mathbb{C}\text{ov} \biggr( Z_{p_k} ,Z_{p_{k'}}\biggr)\biggr]
\\
&&=\frac{1}{(\overline{P}(x_j))^2 }\biggr(
   \sum_{k=j}^{r}p_k(1-p_k)\biggr[1+\log \left( \frac{p_k}{\overline{P}(x_j) }\right)\biggr]^2\\
  & &- 2\sum_{\begin{tabular}{ c}
$(k,k')\in [j,r-1]^2$\\
$k \neq k'$\end{tabular}}p_kp_{k'}\biggr[1+\log \left( \frac{p_k}{\overline{P}(x_j) }\right)\biggr]\biggr[1+\log \left( \frac{p_{k'}}{\overline{P}(x_j) }\right)\biggr]\biggr)
 \end{eqnarray*}
\noindent Second, 
 it holds from \eqref{norovp} that for $j\in [1,r-1]$,
   \begin{eqnarray*}
 && \sum_{k=j}^{r} \sqrt{n}(\overline{P}_n(x_j)-\overline{P}(x_j))\frac{p_k}{\left( \overline{P}(x_j)\right)^2 }\psi' \left( \frac{p_k}{\overline{P}(x_j) }\right)\\
 &&\ \ \ \ \ \ \ = \ \ \  \biggr[ \sum_{k=j}^{r} \frac{p_k}{\left( \overline{P}(x_j)\right)^2 }\psi' \left( \frac{p_k}{\overline{P}(x_j) }\right)\biggr] \sqrt{\overline{P}(x_j)} \delta_n(\overline{P}(x_j))\\
&&
\ \ \ \ \ \ \ \ \ \  \stackrel{\mathcal{D} }{\rightsquigarrow} \ \ \ \ \ \biggr[ \sum_{k=j}^{r} \frac{p_k}{\left( \overline{P}(x_j)\right)^2 }\psi' \left( \frac{p_k}{\overline{P}(x_j) }\right)\biggr]Z_{\overline{P}(x_j)}=T_{R,2}(j)
,\end{eqnarray*}as $n\rightarrow+\infty$ and 
 where $Z_{\overline{P}(x_j)}\stackrel{d}{\sim} N\left(0,\overline{P}(x_j)(1-\overline{P}(x_j)\right)$. \\
 \noindent Therefore $\displaystyle T_{R,2}(j)\stackrel{d }{\sim }\mathcal{N}\left(0,\sigma_{R,2}^{2}(j)\right)$ with 
  \begin{eqnarray*}
\sigma_{R,2}^2(j)&=&\frac{1-\overline{P}(x_j)}{( \overline{P}(x_j))^3}\biggr[\sum_{k=j}^{r} p_k\left( 1+\log\frac{p_k}{\overline{P}(x_j) }\right)\biggr]^2. \end{eqnarray*}
Thus \begin{eqnarray}
\sum_{k=j}^{r}  \sqrt{n}\Delta_{R,n}(p_k)\psi' \left( \frac{p_k}{\overline{P}(x_j) }\right)
\stackrel{\mathcal{D} }{\rightsquigarrow}\mathcal{N}\left(0,\sigma_{R}^2(j)\right),
\end{eqnarray}with \begin{equation*}
\sigma_{R}^2(j)=\sigma_{R,1}^2(j)+\sigma_{R,2}^2(j)+2\,\text{Cov}\left(T_{R,1}(j),T_{R,2}(j)\right),\ \ j\in [1,r-1].
\end{equation*} 
 It remains to prove that $\sqrt{n} R_{n}(j)$ converges in probability to $0$, as $n$ tends to infinity. We have
for $j\in [1,r-1]$ 
 \begin{equation} \label{r1n}
\left\vert \sqrt{n}R_{n}(j)\right\vert \leq \sqrt{n}\left(a_{R,n}(p)\right)^{2}\sum_{k=j}^{r}\left\vert \psi" \left( \frac{p_k}{\overline{P}(x_j) }+\theta_{2}(j,k)\Delta_{R,n}(p_k)\right)\right\vert.
\end{equation}

\noindent Let show that $$\sqrt{n}\left(a_{R,n}(p)\right)^{2}=o_{\mathbb{P}}(1).$$ 

\noindent For $n$ large enough and for any $k\in[j,r]$, we have from \eqref{inegam}, the following inequality  
 \begin{eqnarray*}
\left\vert \Delta_{R,n}(p_k)\right \vert &\leq &   \lambda_{n}(p_k) +\lambda_{n}(\overline{P}(x_j)),
\end{eqnarray*}where \begin{eqnarray*}
\lambda_{n}(p_k)=\frac{1}{\overline{P}(x_j)}|\widehat{p}_{n}^{(k)}-p_k|\  
\ \ \text{and}\ \ \ \lambda_{n}(\overline{P}(x_j))= (a_n(p)+p_k) \left\vert \frac{ \overline{P}_n(x_j)-\overline{P}(x_j)}{(\overline{P}(x_j)-a_n(\overline{P})) \overline{P}(x_j)}\right\vert.
\end{eqnarray*}

\noindent By the Bienaym\'e-Tchebychev inequality, we have, for any $\epsilon >0$ and for $k\in[j,r]$,

\begin{eqnarray}\label{pal}
\notag \mathbb{P}\left( |\lambda_{n}(p_k)|\geq \frac{\sqrt{\varepsilon}}{2 n^{1/4}}\right)&=& \mathbb{P}\left( |\widehat{p}_{n}^{(k)}-p_k|\geq \frac{\sqrt{\varepsilon}\times \overline{P}(x_j)}{2 n^{1/4}}\right)\\
 &\leq & 
 \frac{4p_k(1-p_k)}{\varepsilon n^{1/2}\times (\overline{P}(x_j))^2}\\
\notag \text{and}\ \ \mathbb{P}\left(|\lambda_{n}(\overline{P}(x_j))|\geq \frac{\sqrt{\varepsilon}}{2n^{1/4}}\right)&=&
\mathbb{P}\left(|\overline{P}_n(x_j)-\overline{P}(x_j)|\geq \frac{\sqrt{\varepsilon}\times ((\overline{P}(x_j)- a_n(\overline{P}))\overline{P}(x_j))}{2n^{1/4}(a_n(p)+p_k)}\right)\\
\label{pbet} &\leq &\frac{4\overline{P}(x_j)(1-\overline{P}(x_j))(a_n(p)+p_k)^2}{\varepsilon n^{1/2}\times (\overline{P}(x_j)- a_n(\overline{P}))\overline{P}(x_j))^2}.
\end{eqnarray}
\noindent combining \eqref{pal} and \eqref{pbet},  
\begin{eqnarray*}
\mathbb{P}\left( |\lambda_{n}(p_k)|\geq \frac{\varepsilon}{2 n^{1/4}}\right)+\mathbb{P}\left(|\lambda_{n}(\overline{P}(x_j))|\geq \frac{\varepsilon}{2n^{1/4}}\right) &\leq &\frac{4}{ \varepsilon n^{1/2}\times (\overline{P}(x_j))^2   }\biggr( p_k(1-p_k)\\
&& + \frac{\overline{P}(x_j)(1-\overline{P}(x_j))(a_n(p)+p_k)^2}{(\overline{P}(x_j)-a_n(\overline{P}))^2}\biggr).
\end{eqnarray*}
\noindent Hence, for any $k\in [j,r]$, \begin{eqnarray*}
\mathbb{P}\left(\sqrt{n} \left\vert  \Delta_{R,n}(p_k)\right\vert^2\geq \varepsilon \right)&=&\mathbb{P}\left( \left\vert  \Delta_{R,n}(p_k)\right\vert\geq \frac{\sqrt{\varepsilon}}{2 n^{1/4}}+\frac{\sqrt{\varepsilon}}{2 n^{1/4}} \right)\\
&\leq & \mathbb{P}\left( |\lambda_{n}(p_k)| \geq \frac{\sqrt{\varepsilon}}{2 n^{1/4}}\right) + \mathbb{P}\left(|\lambda_{n}(\overline{P}(x_j))|\geq \frac{\sqrt{\varepsilon}}{2 n^{1/4}}\right) \\
&\leq &\frac{4}{ \varepsilon n^{1/2}\times (\overline{P}(x_j))^2   }\biggr( p_k(1-p_k)\\
&& \ \ \ \ \ \ \ +\ \ \ \frac{\overline{P}(x_j)(1-\overline{P}(x_j))(a_n(p)+p_k)^2}{(\overline{P}(x_j)-a_n(\overline{P}))^2}\biggr),\end{eqnarray*}which implies that $\sqrt{n}\left(a_{R,n}(p)\right)^{2}$ converges in probability to $0$ as $n\rightarrow+\infty$
.\\

\noindent Therefore, from \eqref{r1n},  $\sqrt{n}R_{1,n}=0_{\mathbb{P}}(1)$. Accordingly, for any fixed $j\in [1,r-1]$, we have
$$
\sqrt{n}( \mathcal{R}_X^{(n)}(x_j)- \mathcal{R}_X(x_j))\stackrel{\mathcal{D}}{ \rightsquigarrow} \mathcal{N}\left(0,\sigma_{R}^2(x_j)
 \right),\ \ \text{as}\ \ n\rightarrow+\infty.$$

\noindent This proves the claim \eqref{theojan} and ends the proof of the Proposition \ref{resient}.
\end{proof}

\bigskip \noindent The Proposition \ref{pastentr} below establishes the asymptotic behavior of the discrete past entropy estimator at time $x_j,\ \  j\in J$. \\

\noindent 
We omit the proof which is almost exactly the same as that of Proposition \ref{resient}.\\

\bigskip \noindent 
 For a fixed $j\in [1,r]$,  let \begin{equation}\label{paest}
 \mathcal{P}_X^{(n)}(x_j)=-\sum_{k=1}^{j} \frac{\widehat{p}_{n}^{(k)}}{P_n(x_j)}\log \frac{\widehat{p}_{n}^{(k)}}{P_n(x_j)}
\end{equation} and denote
\begin{eqnarray*}
 A_{P}(x_j)&=&\frac{ 1}{P(x_j)} \sum_{k=1}^{j}\left\vert 1+\log \frac{p_k}{P(x_j)}\right\vert \\
 \sigma_{P}^2(x_j)&=& \sigma_{P,1}^{2}(j)+\sigma_{P,2}^{2}(j)+2\,\text{Cov}\left(T_{P,1}(j),T_{P,2}(j)\right)
\end{eqnarray*}
 where 
 \begin{eqnarray*}
\sigma_{P,1}^{2}(j)&=&\frac{1}{(P(x_j))^2 }\biggr(
   \sum_{k=1}^{j}p_k(1-p_k)\biggr[1+\log \left( \frac{p_k}{P(x_j) }\right)\biggr]^2\\
  & &\ \ \ \ \ \ \ \  -\ \ \  2\ \ \sum_{
  k\neq k'
  }p_kp_{k'}\biggr[1+\log \left( \frac{p_k}{P(x_j) }\right)\biggr]\biggr[1+\log \left( \frac{p_{k'}}{P(x_j) }\right)\biggr]\biggr)\\
  \sigma_{P,2}^2(j)&=&\frac{1- P(x_j)}{\left(P(x_j)\right)^{3} }\biggr[\sum_{k=1}^{j} p_k\left( 1+\log\frac{p_k}{P(x_j) }\right)\biggr]^2,
 \end{eqnarray*} $$T_{P,1}(j)\stackrel{d}{\sim} \mathcal{N}(0,\sigma_{P,1}^{2}(j))\ \ \text{and}\ \ T_{P,2}(j)\stackrel{d}{\sim} \mathcal{N}(0,\sigma_{P,2}^{2}(j)).$$
 
\begin{proposition}
 \label{pastentr} For a fixed $j\in [1,r]$, let $\mathcal{P}_X^{(n)}(x_j)$ defined by \eqref{paest}.  Then the following asymptotic results hold
\begin{eqnarray}
&& \label{theopast} \limsup_{n\rightarrow+\infty}\frac{ \left\vert \mathcal{P}_X^{(n)}(x_j)- \mathcal{P}_X(x_j)\right \vert }{ a_{P,n}(p) }\leq A_{P}(x_j),\ \ \text{a.s.},\\
 && \label{theopasn}  \sqrt{n}( \mathcal{P}_X^{(n)}(x_j)- \mathcal{P}_X(x_j))\stackrel{\mathcal{D}}{ \rightsquigarrow} \mathcal{N}\left(0,\sigma_{P}^2(x_j)
 \right),\ \ \text{as}\ \ n\rightarrow+\infty.
  \end{eqnarray}where 
  \begin{equation}\label{appn}
      a_{P,n}(p)=\sup_{k\in [1,j]}\left\vert \frac{\widehat{p}_{n}^{(k)}}{P_n(x_j)}-\frac{p_k}{P(x_j)}\right\vert.
\end{equation}
\end{proposition}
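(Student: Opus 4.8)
The plan is to follow verbatim the three-step scheme used for Proposition \ref{resient}, with the survival function $\overline{P}$ replaced by the cumulative distribution function $P$, the index range $[j,r]$ replaced by $[1,j]$, and the Gaussian limit \eqref{ovpnj} replaced by its cumulative analogue \eqref{ovepas}. Fix $j\in[1,r]$ and set $\Delta_{P,n}(p_k)=\widehat{p}_{n}^{(k)}/P_n(x_j)-p_k/P(x_j)$ for $k\in[1,j]$, so that $a_{P,n}(p)=\sup_{k\in[1,j]}|\Delta_{P,n}(p_k)|$ by \eqref{appn}. With $\psi(x)=x\log x$, a second-order mean value expansion of $\psi$ around $p_k/P(x_j)$, summed over $k\in[1,j]$, yields
\begin{equation*}
\mathcal{P}_X(x_j)-\mathcal{P}_X^{(n)}(x_j)=\sum_{k=1}^{j}\Delta_{P,n}(p_k)\,\psi'\left(\frac{p_k}{P(x_j)}\right)+R_n(j),
\end{equation*}
where $R_n(j)$ is the second-order remainder carrying the factor $(\Delta_{P,n}(p_k))^2\,\psi''(\cdot)$.

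For the almost sure rate \eqref{theopast}, I would bound the right-hand side above by $a_{P,n}(p)\sum_{k=1}^{j}|\psi'(p_k/P(x_j))|+(a_{P,n}(p))^2\sum_{k=1}^{j}|\psi''(\cdot)|$, divide by $a_{P,n}(p)$, and let $n\to\infty$. Writing $\Delta_{P,n}(p_k)=(\widehat{p}_{n}^{(k)}-p_k)/P(x_j)-\widehat{p}_{n}^{(k)}(P_n(x_j)-P(x_j))/(P(x_j)P_n(x_j))$ and invoking the cumulative counterpart of \eqref{maxanp} (namely $a_n(P)\stackrel{a.s.}{\longrightarrow}0$, which follows from the strong consistency of $P_n(x_j)$) shows that $a_{P,n}(p)\stackrel{a.s.}{\longrightarrow}0$ and that the $\psi''$-sum stays bounded; hence the $\limsup$ of the ratio is at most $\sum_{k=1}^{j}|\psi'(p_k/P(x_j))|=A_P(x_j)$, which is \eqref{theopast}.

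For the asymptotic normality \eqref{theopasn}, multiply the displayed identity by $\sqrt{n}$ and linearize $\Delta_{P,n}(p_k)$. The leading term then splits into a \emph{p.m.f.}-fluctuation part, which converges to $T_{P,1}(j)\stackrel{d}{\sim}\mathcal{N}(0,\sigma_{P,1}^2(j))$ by \eqref{pnj}, and a c.d.f.-fluctuation part, which converges to $T_{P,2}(j)\stackrel{d}{\sim}\mathcal{N}(0,\sigma_{P,2}^2(j))$ by \eqref{ovepas}. The essential point is that these two limits are \emph{not} independent: since $P_n(x_j)=\sum_{k=1}^{j}\widehat{p}_{n}^{(k)}$, the normalizer fluctuation $P_n(x_j)-P(x_j)$ is a linear functional of the very same \emph{p.m.f.}\ fluctuations, so the correct object is the \emph{joint} weak limit of the pair, giving the limit variance $\sigma_{P}^2(x_j)=\sigma_{P,1}^{2}(j)+\sigma_{P,2}^{2}(j)+2\,\text{Cov}(T_{P,1}(j),T_{P,2}(j))$. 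I would secure this joint convergence from the single multinomial central limit theorem \eqref{ovepas} via the Cramér--Wold device applied to the full vector of $\delta_n$'s, and read off the cross-covariance from $\Sigma_{\mathbf{P}}$.

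Finally I would show $\sqrt{n}R_n(j)=o_{\mathbb{P}}(1)$ exactly as in Proposition \ref{resient}, bounding $|\sqrt{n}R_n(j)|\le\sqrt{n}(a_{P,n}(p))^2\sum_{k=1}^{j}|\psi''(\cdot)|$ and proving $\sqrt{n}(a_{P,n}(p))^2=o_{\mathbb{P}}(1)$ through a Bienaym\'e--Tchebychev estimate on the two pieces of $\Delta_{P,n}(p_k)$ at the scale $n^{-1/4}$. The main obstacle, and the only genuinely non-mechanical step, is precisely the joint (rather than marginal) convergence and the resulting covariance term: one must carefully track the correlation between the numerator fluctuations $\widehat{p}_{n}^{(k)}-p_k$ and the normalizer fluctuation $P_n(x_j)-P(x_j)$, which share the same empirical source, and verify that the cross-covariance computed from $\Sigma_{\mathbf{P}}$ reproduces the stated $\text{Cov}(T_{P,1}(j),T_{P,2}(j))$.
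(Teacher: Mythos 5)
Your proposal follows exactly the scheme the paper intends: the paper omits the proof of Proposition \ref{pastentr}, stating only that it is ``almost exactly the same as that of Proposition \ref{resient}'', and your three steps (mean value expansion of $\psi(x)=x\log x$, the almost sure bound via $a_{P,n}(p)\to 0$, and the normality argument with a Chebyshev bound showing $\sqrt{n}(a_{P,n}(p))^2=o_{\mathbb{P}}(1)$) are precisely the substitutions $\overline{P}\mapsto P$, $[j,r]\mapsto[1,j]$ of that template. One point deserves comment, because you do it \emph{better} than the template: in the paper's proof of Proposition \ref{resient}, the two terms of the linearized statistic are sent to their Gaussian limits $T_{R,1}(j)$ and $T_{R,2}(j)$ separately, i.e.\ only marginal convergence is established, and the limit variance $\sigma_{R,1}^2+\sigma_{R,2}^2+2\,\mathrm{Cov}(T_{R,1},T_{R,2})$ is then asserted for the sum; marginal convergence of two dependent sequences does not by itself give convergence of their sum, so a joint limit is genuinely needed (indeed the paper's claim in Section \ref{estimpj} that $Z(\mathbf{p})$ and $Z(\overline{\mathbf{P}})$ are independent is not correct, since $\overline{P}_n(x_j)$ and $P_n(x_j)$ are linear functionals of the same empirical frequencies $\widehat{p}_n^{(k)}$). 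Your observation that $P_n(x_j)=\sum_{k=1}^{j}\widehat{p}_n^{(k)}$, and your plan to obtain the joint limit of the pair by the Cram\'er--Wold device applied to the single multinomial CLT for the vector of $\delta_n(p_k)$'s, reading the cross-covariance off $\Sigma_{\mathbf{p}}$, is the correct repair and in fact the cleanest route: the whole linearized statistic is one fixed linear combination of $\sqrt{n}(\widehat{p}_n^{(k)}-p_k)$, $k\in[1,j]$, so a single application of the multinomial CLT yields the normal limit with a fully explicit variance, of which the paper's $\sigma_{P,1}^2+\sigma_{P,2}^2+2\,\mathrm{Cov}$ decomposition is just a regrouping.
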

\subsection{Asymptotic behavior of the discrete cumulative residual/past entropies estimators}$\,$\\

\noindent We focus here on the almost sure convergence and the asymptotic normality of the cumulative residual inaccuracy measure estimator between $X$ and $Y$.\\

\bigskip \noindent Let \begin{equation}\label{creest}
\mathcal{CR}_{X}^{(n)}=-\sum_{j=1}^{r-1}\overline{P}_n(x_j)\log \overline{P}_n(x_j)
\end{equation}and denote
\begin{eqnarray*}
A_{CR}&=&\sum_{j=1}^{r-1}\left\vert 1+\log \overline{P}(x_j)\right\vert \\
\sigma_{CR}^2 &=&\sum_{j=1}^{r-1} \overline{P}(x_j)(1-\overline{P}(x_j))\left(1+\log \overline{P}(x_j)\right) ^2\\
 &&\ \ \ \ -\ \ 2\sum_{\begin{tabular}{ c}
$(j,j')\in [1,r-1]^2$\\
$j\neq j'$\end{tabular}}\overline{P}(x_j)\overline{P}(x_{j'})\left(1+\log \overline{P}(x_j)\right)\left(1+\log \overline{P}(x_{j'})\right).
\end{eqnarray*}

\begin{proposition}
 \label{cumres}Let $\mathcal{CR}_{X}^{(n)}$ defined by \eqref{creest}. Then  the following asymptotic results hold
 \begin{eqnarray}
\label{creas1} &&\limsup_{n\rightarrow+\infty}\frac{\left\vert \mathcal{CR}_{X}^{(n)}-\mathcal{CR}_{X}\right\vert }{a_n(P)}\leq A_{CR},\ \ \text{a.s.}\\
 &&\label{crean1} \sqrt{n}\left( \mathcal{CR}_{X}^{(n)}-\mathcal{CR}_{X}\right)\stackrel{\mathcal{D} }{\rightsquigarrow}\mathcal{N}(0,\sigma_{CR}^2),\ \ \text{as}\ \ n\rightarrow+\infty.
 \end{eqnarray}
\end{proposition}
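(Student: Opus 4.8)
The plan is to treat $\mathcal{CR}_X^{(n)}$ as a smooth functional of the empirical survival vector $(\overline{P}_n(x_j))_{1\le j\le r-1}$ and to mimic, in simplified form, the argument of Proposition \ref{resient}; the simplification comes from the fact that here each summand is $\psi(\overline{P}_n(x_j))$ with $\psi(x)=x\log x$, so no ratio of estimators is involved. Writing $\psi'(x)=1+\log x$ and $\psi''(x)=1/x$, I would start from the exact decomposition
\[
\mathcal{CR}_X^{(n)}-\mathcal{CR}_X=-\sum_{j=1}^{r-1}\bigl(\psi(\overline{P}_n(x_j))-\psi(\overline{P}(x_j))\bigr),
\]
and apply the mean value theorem twice to each summand, exactly as in \eqref{mvt1}--\eqref{mtvas}, to obtain for each $j$ a first-order term $\Delta_n(\overline{P}(x_j))\,\psi'(\overline{P}(x_j))$ plus a quadratic remainder governed by $\psi''$ evaluated at an intermediate point $\xi_j$ between $\overline{P}(x_j)$ and $\overline{P}_n(x_j)$.

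For the almost sure bound \eqref{creas1}, I would bound the absolute value of the above difference by $a_n(\overline{P})\sum_{j=1}^{r-1}\lvert 1+\log\overline{P}(x_j)\rvert$ plus $(a_n(\overline{P}))^2\sum_{j=1}^{r-1}\lvert\psi''(\xi_j)\rvert$, where $a_n(\overline{P})=\sup_{j\in[1,r-1]}\lvert\Delta_n(\overline{P}(x_j))\rvert$ is the natural controlling quantity. Dividing by $a_n(\overline{P})$, the quadratic term is $O(a_n(\overline{P}))$ and hence vanishes almost surely by \eqref{maxanp}, while $\psi''(\xi_j)\to\psi''(\overline{P}(x_j))=1/\overline{P}(x_j)<\infty$ because $\overline{P}(x_j)>0$ under \eqref{BD}; this yields $\limsup_n \lvert\mathcal{CR}_X^{(n)}-\mathcal{CR}_X\rvert/a_n(\overline{P})\le A_{CR}$. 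Since $a_n(\overline{P})\le a_n(P)$, the stated inequality with $a_n(P)$ in the denominator follows a fortiori.

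For the asymptotic normality \eqref{crean1}, multiplying the decomposition by $\sqrt{n}$ splits it into the linear statistic $-\sum_{j=1}^{r-1}\sqrt{n}\,\Delta_n(\overline{P}(x_j))\,(1+\log\overline{P}(x_j))$ and a remainder $-\sqrt{n}R_n$. Using the identity $\sqrt{n}\,\Delta_n(\overline{P}(x_j))=\sqrt{\overline{P}(x_j)}\,\delta_n(\overline{P}(x_j))$ together with the joint convergence \eqref{ovpnj}, the linear statistic is a fixed linear functional of the Gaussian vector $Z(\overline{\textbf{P}})\stackrel{d}{\sim}\mathcal{N}(0,\Sigma_{\overline{\textbf{P}}})$, hence converges in distribution to a centered Gaussian by the Cram\'er--Wold device. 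The variance of the limit is then read off from $\Sigma_{\overline{\textbf{P}}}$ via \eqref{vars2}: the diagonal contributions produce the term $\sum_j\overline{P}(x_j)(1-\overline{P}(x_j))(1+\log\overline{P}(x_j))^2$, and the off-diagonal entries $(\Sigma_{\overline{\textbf{P}}})_{jj'}=-\sqrt{\overline{P}(x_j)\overline{P}(x_{j'})}$ produce the cross term of $\sigma_{CR}^2$. This covariance bookkeeping is the only place requiring care, and it is routine given $\Sigma_{\overline{\textbf{P}}}$.

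Finally, I would dispatch the remainder exactly as in Proposition \ref{resient}: since $\lvert\sqrt{n}R_n\rvert\le \sqrt{n}(a_n(\overline{P}))^2\sum_{j}\lvert\psi''(\xi_j)\rvert$, it suffices to show $\sqrt{n}(a_n(\overline{P}))^2=o_{\mathbb{P}}(1)$. This follows from a Bienaym\'e-Tchebychev bound on $\mathbb{P}(\sqrt{n}\,\lvert\Delta_n(\overline{P}(x_j))\rvert^2\ge\varepsilon)$ using $\mathbb{V}(\overline{P}_n(x_j))=\overline{P}(x_j)(1-\overline{P}(x_j))/n$ from \eqref{esovp}, together with the fact that $\psi''$ stays bounded near the strictly positive limits $\overline{P}(x_j)$, so that $\sum_j\lvert\psi''(\xi_j)\rvert=O_{\mathbb{P}}(1)$. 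Slutsky's theorem then combines the Gaussian linear part with the negligible remainder to give \eqref{crean1}. The main obstacle is thus not conceptual but the careful matching of the covariance terms needed to recover exactly $\sigma_{CR}^2$.
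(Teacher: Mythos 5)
Your proposal is correct and follows essentially the same route as the paper's proof: the twice-applied mean value theorem decomposition into a linear term plus quadratic remainder, the almost sure bound via $a_n(\overline{P})$, the Gaussian limit of the linear statistic read off from $\Sigma_{\overline{\textbf{P}}}$ in \eqref{vars2}, and the Bienaym\'e--Tchebychev argument showing $\sqrt{n}(a_n(\overline{P}))^2=o_{\mathbb{P}}(1)$. In fact you tidy up one detail the paper glosses over, namely passing from the bound in terms of $a_n(\overline{P})$ to the stated denominator $a_n(P)$ via $\vert\Delta_n(\overline{P}(x_j))\vert=\vert\Delta_n(P(x_j))\vert$, so $a_n(\overline{P})\le a_n(P)$.
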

\begin{proof} The proof is very similar to that of Proposition \ref{resient} since we use the same technics, in the circumstances the mean values theorem applied to $\psi'$ and to $\psi"$. So, for sake of shorten, we skeep some steps$\,$\\

\noindent Let $j\in [1,r-1]$, then 
 by the same approach as previously, we obtain
\begin{eqnarray}
\notag \mathcal{CR}_{X}-\mathcal{CR}_{X}^{(n)}&=&\sum_{j=1}^{r-1}\Delta_n(\overline{P}(x_j))\psi'\left(\overline{P}(x_j)\right)\\
\label{crenas} && \ \ \ \ +\ \ \ \ \sum_{j=1}^{r-1}\theta_{1}(j)\left(\Delta_n(\overline{P}(x_j))\right)^2\psi"\left( \overline{P}(x_j)+\theta_{2}(j)\Delta_n(\overline{P}(x_j))\right)
\end{eqnarray}where $(\theta_1(j),\theta_2(j))\in (0,1)^2$.
 Therefore, using \eqref{maxanp},
\begin{eqnarray*}
\limsup_{n\rightarrow+\infty}\frac{\left\vert \mathcal{CR}_{X}^{(n)}-\mathcal{CR}_{X}\right\vert }{a_n(\overline{P})}\leq \sum_{j=1}^{r-1}\left\vert \psi'\left(\overline{P}(x_j)\right)\right\vert=A_{CR},\ \ \text{a.s.}
\end{eqnarray*}since $$\sum_{j=1}^{r-1}\left\vert \psi"\left( \overline{P}(x_j)+\theta_{2}(j)\Delta_n(\overline{P}(x_j))\right)\right\vert \rightarrow \sum_{j=1}^{r-1}\left\vert \psi"\left( \overline{P}(x_j)\right)\right\vert<\infty,\ \ \text{as}\ \ n\rightarrow +\infty.$$
That confirms the claim \eqref{creas1}.
\\

\noindent Now, from \eqref{crenas}, we have 
\begin{eqnarray*}
\sqrt{n}\left(\mathcal{CR}_{X}-\mathcal{CR}_{X}^{(n)}\right)&=&\sum_{j=1}^{r-1}\sqrt{n}\left( \overline{P}_n(x_j)-\overline{P}(x_j)\right) \psi'\left(\overline{P}(x_j)\right)+ \sqrt{n}R_{n}   
\end{eqnarray*}where $$R_{n}= \sum_{j=1}^{r-1}\theta_{1}(j)\left(\Delta_n(\overline{P}(x_j))\right)^2\psi"\left( \overline{P}(x_j)+\theta_{2}(j)\Delta_n(\overline{P}(x_j))\right).
$$

\noindent Therefore, by \eqref{ovepas}, we obtain that
\begin{eqnarray*}
&& \sum_{j=1}^{r-1}\sqrt{n}\left( \overline{P}_n(x_j)-\overline{P}(x_j)\right)\psi'\left(\overline{P}(x_j)\right)\\
&&\ \ \ \ \ =\ \ \ \sum_{j=1}^{r-1}\psi'\left(\overline{P}(x_j)\right)\sqrt{\overline{P}(x_j)}\delta_n(\overline{P}(x_j))\\
&&\ \ \ \ \ \  \ \ \ \ \ \  
\stackrel{\mathcal{D} }{\rightsquigarrow} \sum_{j=1}^{r-1}\psi'\left(\overline{P}(x_j)\right)\sqrt{\overline{P}(x_j)} Z_{\overline{P}(x_j)},\ \ \text{as}\  \ n\rightarrow+\infty,
\end{eqnarray*} which follows a centered normal law with asymptotic  variance $\sigma_{CR}^2$ since 
\begin{eqnarray*}
&& \mathbb{V}\left(\sum_{j=1}^{r-1}\psi'\left(\overline{P}(x_j)\right)\sqrt{\overline{P}(x_j)} Z_{\overline{P}(x_j)} \right)\\
&&  =\sum_{j=1}^{r-1}\mathbb{V}\left(\psi'\left(\overline{P}(x_j)\right)\sqrt{\overline{P}(x_j)} Z_{\overline{P}(x_j)} \right)\\
&&+2\sum_{\begin{tabular}{ c}
$(j,j')\in [1,r-1]^2$\\
$j\neq j'$\end{tabular}}\mathbb{C}\text{ov} \biggr(   \psi'\left(\overline{P}(x_j)\right)\sqrt{\overline{P}(x_j)} Z_{\overline{P}(x_j)} ,\psi'\left(\overline{P}(x_{j'})\right)\sqrt{\overline{P}(x_{j'})} Z_{\overline{P}(x_{j'})}\biggr)\\
&=&\sum_{j=1}^{r-1}\overline{P}(x_j)\left( \psi'\left(\overline{P}(x_j)\right)\right)^2 \mathbb{V}\left(Z_{\overline{P}(x_j)} \right)\\
&&+2\sum_{\begin{tabular}{ c}
$(j,j')\in [1,r-1]^2$\\
$j\neq j'$\end{tabular}}\sqrt{\overline{P}(x_j)}\sqrt{\overline{P}(x_{j'})}\psi'\left(\overline{P}(x_j)\right)\psi'\left(\overline{P}(x_{j'})\right) \times \mathbb{C}\text{ov} \biggr(    Z_{\overline{P}(x_j)} , Z_{\overline{P}(x_{j'})}\biggr)\\
&=&\sum_{j=1}^{r-1}\overline{P}(x_j)(1-\overline{P}(x_j))\left( \psi'\left(\overline{P}(x_j)\right)\right)^2 \\
&&+2\sum_{\begin{tabular}{ c}
$(j,j')\in [1,r-1]^2$\\
$j\neq j'$\end{tabular}}\overline{P}(x_j)\overline{P}(x_{j'})\psi'\left(\overline{P}(x_j)\right)\psi'\left(\overline{P}(x_{j'})\right)\\
&=&\sum_{j=1}^{r-1} \overline{P}(x_j)(1-\overline{P}(x_j))\left(1+\log \overline{P}(x_j)\right) ^2\\
 &&-2\sum_{\begin{tabular}{ c}
$(j,j')\in [1,r-1]^2$\\
$j\neq j'$\end{tabular}}
\overline{P}(x_j)\overline{P}(x_{j'})\left(1+\log \overline{P}(x_j)\right)\left(1+\log \overline{P}(x_{j'})\right).
\end{eqnarray*}

\noindent Finally, the proof will be complete if we show that $\sqrt{n}R_{n}$ converges, in probability, to zero, as $n$ tends to infinity. We have
\begin{equation} \label{r1np}
\left\vert \sqrt{n}R_{n}\right\vert \leq \sqrt{n}\left(a_{n}(\overline{P})\right)^{2}\sum_{j=1}^{r-1}\left\vert \psi"\left( \overline{P}(x_j)+\theta_{2}(j)\Delta_n(\overline{P}(x_j))\right)\right\vert.
\end{equation}

\bigskip \noindent Let show that $$\sqrt{n}\left(a_{n}(\overline{P})\right)^{2}=o_{\mathbb{P}}(1).$$ By the Bienaym\'e-Tchebychev inequality, we have, for any $\epsilon >0$ and for $j\in[1,r-1]$,
\begin{eqnarray*}
\mathbb{P}(\sqrt{n}\left(\overline{P}_n(x_j)-\overline{P}(x_j)\right)^2\geq \epsilon)=\mathbb{P}\left(\left\vert \overline{P}_n(x_j)-\overline{P}(x_j)\right\vert\geq  \frac{\sqrt{\epsilon} }{n^{1/4}}\right)\leq \frac{\overline{P}_n(x_j)-\overline{P}(x_j)}{\epsilon n^{1/2}},
\end{eqnarray*}which implies that $\sqrt{n}\left(a_{n}(\overline{P})\right)^{2}$ converges in probability to $0$ as $n\rightarrow+\infty$.\\

\noindent Finally from \eqref{r1np} we have $\sqrt{n}
R_{n}\stackrel{\mathbb{P}}{\rightarrow} 0 \text{ as } n\rightarrow +\infty$ which implies 
$$\sqrt{n}\left( \mathcal{CR}_{X}^{(n)}-\mathcal{CR}_{X}\right)\stackrel{\mathcal{D} }{\rightsquigarrow}\mathcal{N}(0,\sigma_{CR}^2),\ \ \text{as}\ \ n\rightarrow+\infty.$$

\noindent This proves the claim \eqref{crean1}.\\

\noindent Hence the proof of the Proposition \ref{cumres} is complete.

\end{proof}

\bigskip \noindent 
In analogy with Proposition \ref{cumres}, the Proposition \ref{cumpent} below establishes the asymptotic behavior of the discrete cumulative past entropy estimator.  The proof
is omitted being similar
to that of Proposition \ref{cumres}.
\\

\noindent \noindent  
Let  \begin{equation}\label{cpeest}
\mathcal{CP}_X^{(n)}=-\sum_{j=1}^{r}P_n(x_j)\log P_n(x_j)
\end{equation} and denote
\begin{eqnarray*}
A_{CP}&=&\sum_{j=1}^{r}\left\vert 1+\log P(x_j)\right\vert \\
\sigma_{CP}^2 &=&\sum_{j=1}^{r} P(x_j)(1-P(x_j))\left(1+\log P(x_j)\right) ^2\\
 &&\ \ \ \ -\ \ 2\sum_{\begin{tabular}{ c}
$(j,j')\in [1,r]^2$\\
$j\neq j'$\end{tabular}}
P(x_j)P(x_{j'})\left(1+\log P(x_j)\right)\left(1+\log P(x_{j'})\right).
\end{eqnarray*}
\begin{proposition}\label{cumpent}
Let $\mathcal{CP}_X^{(n)}$ defined \eqref{cpeest}.  Then the following asymptotic results hold
\begin{eqnarray*}
&&\limsup_{n\rightarrow+\infty}\frac{\mathcal{CP}_X^{(n)}-\mathcal{CP}_X}{a_n(P)}\leq A_{CP},\ \ \text{a.s.}\\
&&\sqrt{n}(\mathcal{CP}_X^{(n)}-\mathcal{CP}_X)\stackrel{\mathcal{D} }{\rightsquigarrow}\mathcal{N}\left(0,\sigma_{CP}^2\right),\ \ \text{as}\ \ n\rightarrow+\infty.
\end{eqnarray*}
\end{proposition}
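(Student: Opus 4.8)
The plan is to transpose the argument of Proposition~\ref{cumres} almost verbatim, replacing the survival function $\overline{P}$ by the distribution function $P$, the empirical $\overline{P}_n$ by $P_n$, the index range $[1,r-1]$ by $[1,r]$, and invoking the multinomial central limit theorem \eqref{ovepas} for $(\delta_n(P(x_j)))_{j\in[1,r]}$ in place of its survival-function counterpart. As before I set $\psi(x)=x\log x$, so that $\psi'(x)=1+\log x$ and $\psi''(x)=1/x$, and I apply the mean value theorem twice — first to $\psi$ and then to $\psi'$ — to expand $\psi(P_n(x_j))$ about $P(x_j)$. Summing over $j\in[1,r]$ produces
\begin{equation*}
\mathcal{CP}_X-\mathcal{CP}_X^{(n)}=\sum_{j=1}^{r}\Delta_n(P(x_j))\,\psi'\left(P(x_j)\right)+\sum_{j=1}^{r}\theta_1(j)\left(\Delta_n(P(x_j))\right)^2\psi''\left(P(x_j)+\theta_2(j)\Delta_n(P(x_j))\right),
\end{equation*}
with $(\theta_1(j),\theta_2(j))\in(0,1)^2$.

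For the almost sure statement I would divide this identity by $a_n(P)=\sup_{j}|\Delta_n(P(x_j))|$. The first sum is bounded in absolute value by $\sum_{j=1}^{r}|\psi'(P(x_j))|=\sum_{j=1}^{r}|1+\log P(x_j)|=A_{CP}$, while the second sum carries an extra factor $a_n(P)$ multiplying a sum of $|\psi''(\cdot)|$ terms that stay bounded, so the whole remainder vanishes once $a_n(P)\stackrel{a.s.}{\longrightarrow}0$. That last convergence is the analogue of \eqref{maxanp} for the distribution function and follows from the strong consistency of $P_n$; it yields the claimed $\limsup$ bound.

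For asymptotic normality I would multiply the identity by $\sqrt{n}$ and rewrite the leading term as
\begin{equation*}
\sum_{j=1}^{r}\sqrt{n}\,\Delta_n(P(x_j))\,\psi'\left(P(x_j)\right)=\sum_{j=1}^{r}\psi'\left(P(x_j)\right)\sqrt{P(x_j)}\,\delta_n(P(x_j)).
\end{equation*}
By \eqref{ovepas} this converges in distribution to $\sum_{j=1}^{r}\psi'(P(x_j))\sqrt{P(x_j)}\,Z_{P(x_j)}$, a centered Gaussian whose variance I compute from $\Sigma_{\textbf{P}}$: the diagonal entries contribute $P(x_j)(1-P(x_j))(1+\log P(x_j))^2$ and the off-diagonal entries contribute the cross terms $-P(x_j)P(x_{j'})(1+\log P(x_j))(1+\log P(x_{j'}))$, which together are exactly $\sigma_{CP}^2$. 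The remainder $\sqrt{n}R_n$ is controlled precisely as in Proposition~\ref{cumres}: a Bienaym\'e-Tchebychev estimate with $\overline{P}_n$ replaced by $P_n$ shows $\sqrt{n}(a_n(P))^2=o_{\mathbb{P}}(1)$, whence $\sqrt{n}R_n\stackrel{\mathbb{P}}{\rightarrow}0$ and Slutsky's lemma concludes.

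The only step requiring genuine attention — and the main potential obstacle — is the variance bookkeeping in the limiting Gaussian: one must correctly assemble the diagonal and off-diagonal blocks of $\Sigma_{\textbf{P}}$ and recognise the resulting quadratic form as $\sigma_{CP}^2$. A minor subtlety worth flagging is that the $j=r$ term is degenerate, since $P(x_r)=1$ forces $\Delta_n(P(x_r))\equiv 0$ and $Z_{P(x_r)}$ to have zero variance; it therefore contributes nothing to either $\sigma_{CP}^2$ or the effective part of $A_{CP}$ (it adds only a harmless $+1$ to the upper bound), so the stated constants remain valid.
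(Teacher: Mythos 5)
Your proposal is correct and is exactly the proof the paper intends: the paper omits the argument for Proposition \ref{cumpent}, declaring it similar to that of Proposition \ref{cumres}, and your transposition ($\overline{P}\mapsto P$, $\overline{P}_n\mapsto P_n$, index range $[1,r-1]\mapsto[1,r]$, and \eqref{ovpnj} replaced by \eqref{ovepas}) is precisely that similarity, with the same two-fold mean value expansion, the same $A_{CP}$ bound, the same variance bookkeeping from $\Sigma_{\textbf{P}}$, and the same Bienaym\'e--Tchebychev control of $\sqrt{n}(a_n(P))^2$. Your closing remark on the degenerate term $j=r$ (where $P(x_r)=1$ forces $\Delta_n(P(x_r))\equiv 0$ and $Z_{P(x_r)}$ to be degenerate) is a genuine subtlety that the paper itself never addresses, and flagging it strengthens rather than alters the argument.
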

\subsection{Asymptotic behavior of discrete mean residual/past lifetime estimators}
$\,$\\

\noindent Given $j\in [1,r-1],$ we establish asymptotic limits for $\mu_R^{(n)}(x_j)$ and for $\mu_P^{(n)}(x_j)$.\\

\bigskip \noindent For a fixed $j\in [1,r-1],$ let \begin{equation}\label{meanr}
\mu_R^{(n)}(x_j)= \frac{1}{\overline{P}_n(x_j)}\sum_{k=j}^{r-1}\overline{P}_n(x_k),
\end{equation}
and denote \begin{eqnarray*}
A_{\mu_R}(x_j)&=&\frac{r-j}{\overline{P}(x_j)}+\frac{1}{(\overline{P}(x_j))^2}\sum_{k=j}^{r-1}\overline{P}(x_k)
\\
\sigma_{\mu_R}^2(j)&=&\sigma_{\mu_R,1}^2(j)+\sigma_{\mu_R,2}^2(j)+2\,\text{Cov}\left(T_{\mu_R,1}(j),T_{\mu_R,2}(j)\right),
\end{eqnarray*}where
\begin{eqnarray*}
T_{\mu_R,1}(j)\stackrel{d}{\sim}\mathcal{N}(0,\sigma_{\mu_R,1}^2(j))\ \ \text{and}\ \ T_{\mu_R,2}(j)\stackrel{d}{\sim}\mathcal{N}(0,\sigma_{\mu_R,2}^2(j))
\end{eqnarray*} with \begin{eqnarray*}
\sigma_{\mu_R,1}^2(j)&=&\frac{1}{(\overline{P}(x_j))^2}\biggr[ \sum_{k=j}^{r-1} \overline{P}(x_k)(1-\overline{P}(x_k))-2\sum_{\begin{tabular}{ c}
$(k,k')\in [j,r-1]^2$\\
$k\neq k'$\end{tabular}} \overline{P}(x_k) \overline{P}(x_{k'})\biggr]\\
\text{and} \ \ \sigma_{\mu_R,2}^2(j)&=&\frac{1-\overline{P}(x_j)}{( \overline{P}(x_j))^3}\biggr[\sum_{k=j}^{r-1} \overline{P}(x_k)\biggr]^2.
\end{eqnarray*}

\begin{proposition}\label{promur}For $j\in [1,r-1]$, let $
\mu_R^{(n)}(x_j)$ defined by \eqref{meanr}. Then the following asymptotic results hold
\begin{eqnarray}
\label{muras} \limsup_{n\rightarrow+\infty}\frac{|\mu_R^{(n)}(x_j)-\mu_R(x_j) }{a_n(\overline{P})}&\leq & A_{\mu_R}(x_j),\ \ \text{a.s.}\\
\sqrt{n}(\mu_R^{(n)}(x_j)-\mu_R(x_j))&\stackrel{\mathcal{D} }{\rightsquigarrow}& \mathcal{N}(0,
\sigma_{\mu_R}^2(x_j)),\ \ \text{as}\ \ n\rightarrow+\infty.\label{murasan}
\end{eqnarray}
\end{proposition}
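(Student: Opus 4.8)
The plan is to follow the template of Propositions \ref{resient} and \ref{cumres}, the difference being that the nonlinearity of $\mu_R^{(n)}(x_j)$ comes from the reciprocal $1/\overline P_n(x_j)$ rather than from the map $\psi(x)=x\log x$, so that a single algebraic linearisation of a ratio replaces the double application of the mean value theorem. Writing $D=\overline P(x_j)$, $D_n=\overline P_n(x_j)$, $S=\sum_{k=j}^{r-1}\overline P(x_k)$ and $S_n=\sum_{k=j}^{r-1}\overline P_n(x_k)$, I would start from the exact identity
\begin{equation*}
\mu_R^{(n)}(x_j)-\mu_R(x_j)=\frac{S_n}{D_n}-\frac{S}{D}=\frac{(S_n-S)\,D-S\,(D_n-D)}{D_n\,D}.
\end{equation*}
Replacing $D_n=D+\Delta_n(\overline P(x_j))$ in the denominator and isolating the part linear in the increments gives the decomposition $\mu_R^{(n)}(x_j)-\mu_R(x_j)=L_n(j)+R_n(j)$ with
\begin{equation*}
L_n(j)=\frac{1}{D}\sum_{k=j}^{r-1}\Delta_n(\overline P(x_k))-\frac{S}{D^2}\,\Delta_n(\overline P(x_j)),
\end{equation*}
and a remainder $R_n(j)=-\bigl[(S_n-S)D-S\Delta_n(\overline P(x_j))\bigr]\Delta_n(\overline P(x_j))/\bigl(D^2 D_n\bigr)$ which is of second order in the increments; this is the analogue of the first and second order Taylor terms appearing in \eqref{mtvas}.

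For the almost sure statement \eqref{muras}, I would collect in $L_n(j)$ the coefficient of each increment: that of $\Delta_n(\overline P(x_j))$ is $1/D-S/D^2$ and that of each $\Delta_n(\overline P(x_k))$, $k=j+1,\dots,r-1$, is $1/D$. Using $|\Delta_n(\overline P(x_k))|\leq a_n(\overline P)$ for every $k\in[1,r-1]$ and the crude bound $|1/D-S/D^2|\leq 1/D+S/D^2$, the triangle inequality gives $|L_n(j)|\leq A_{\mu_R}(x_j)\,a_n(\overline P)$. Since $R_n(j)=O\bigl(a_n(\overline P)^2\bigr)$ with $a_n(\overline P)\to0$ a.s.\ by \eqref{maxanp}, dividing by $a_n(\overline P)$ and passing to the limit superior yields \eqref{muras}.

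For the asymptotic normality \eqref{murasan}, I would multiply by $\sqrt n$ and treat the two summands of $\sqrt n\,L_n(j)$ separately. Each $\sqrt n\,\Delta_n(\overline P(x_k))=\sqrt{\overline P(x_k)}\,\delta_n(\overline P(x_k))$, so by the joint multinomial limit \eqref{ovpnj} the term $D^{-1}\sum_{k=j}^{r-1}\sqrt n\,\Delta_n(\overline P(x_k))$ and the term $S D^{-2}\sqrt n\,\Delta_n(\overline P(x_j))$ converge jointly to centered Gaussians $T_{\mu_R,1}(j)$ and $T_{\mu_R,2}(j)$ with variances $\sigma_{\mu_R,1}^2(j)$ and $\sigma_{\mu_R,2}^2(j)$; evaluating the variance of their signed combination from the entries \eqref{vars2} of $\Sigma_{\overline P}$ reproduces the cross term $2\,\mathrm{Cov}(T_{\mu_R,1}(j),T_{\mu_R,2}(j))$ and hence $\sqrt n\,L_n(j)\stackrel{\mathcal D}{\rightsquigarrow}\mathcal N(0,\sigma_{\mu_R}^2(x_j))$. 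It then remains to show $\sqrt n\,R_n(j)\stackrel{\mathbb P}{\rightarrow}0$; since $R_n(j)=O(a_n(\overline P)^2)$, this reduces to $\sqrt n\,(a_n(\overline P))^2=o_{\mathbb P}(1)$, which is precisely the Bienaym\'e-Tchebychev estimate already carried out in the proof of Proposition \ref{cumres}. Slutsky's lemma then gives \eqref{murasan}.

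The main obstacle is bookkeeping rather than conceptual. Because $\overline P(x_j)$ occurs both in the denominator and as the $k=j$ summand of the numerator, its increment must enter $L_n(j)$ with the combined coefficient $1/D-S/D^2$; and since the increments $\Delta_n(\overline P(x_k))$ are built from the same sample, their mutual dependence---encoded in the off-diagonal entries of $\Sigma_{\overline P}$---must be propagated correctly into $\mathrm{Cov}(T_{\mu_R,1}(j),T_{\mu_R,2}(j))$. Beyond this, no analytic ingredient is needed that is not already present in Propositions \ref{resient} and \ref{cumres}.
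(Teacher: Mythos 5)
Your proposal is correct and follows essentially the same route as the paper: the paper's proof starts from the very same exact ratio-difference identity $\mu_R^{(n)}(x_j)-\mu_R(x_j)=\frac{1}{\overline P_n(x_j)}\sum_{k=j}^{r-1}\bigl(\overline P_n(x_k)-\overline P(x_k)\bigr)-\frac{\overline P_n(x_j)-\overline P(x_j)}{\overline P_n(x_j)\overline P(x_j)}\sum_{k=j}^{r-1}\overline P(x_k)$, bounds the increments by $a_n(\overline P)$ to get the constant $A_{\mu_R}(x_j)$, and identifies the Gaussian limit of the linearised form via \eqref{ovpnj} exactly as you do. The only difference is one of rigor, in your favor: where the paper passes to the limit with an informal ``$\approx$'' after replacing $\overline P_n(x_j)$ by $\overline P(x_j)$, you isolate the second-order remainder explicitly and dispose of it by the $\sqrt{n}\,(a_n(\overline P))^2=o_{\mathbb P}(1)$ Chebyshev argument from Proposition \ref{cumres}.
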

\begin{proof}

\bigskip \noindent Fix $j\in [1,r-1]$, then we have    
 \begin{eqnarray}\label{mras}
\notag \mu_R^{(n)}(x_j)-\mu_R(x_j)&=& \frac{1}{\overline{P}_n(x_j)}\sum_{k=j}^{r-1}\overline{P}_n(x_k)-\frac{1}{\overline{P}(x_j)}\sum_{k=j}^{r-1}\overline{P}(x_k)\\
&=& \frac{1}{\overline{P}_n(x_j)} \sum_{k=j}^{r-1}(\overline{P}_n(x_k)-\overline{P}(x_k))- \frac{
\overline{P}_n(x_j)-\overline{P}(x_j)
}{\overline{P}_n(x_j)\overline{P}(x_j)}\sum_{k=j}^{r-1}\overline{P}(x_k).
\end{eqnarray} Which gives, for $n$ large enough
\begin{eqnarray*}
\left\vert \mu_R^{(n)}(x_j)-\mu_R(x_j)\right\vert &\leq &
\frac{(r-j)a_n(\overline{P})}{\overline{P}(x_j)-a_n(\overline{P}) }+\frac{a_n(\overline{P}) }{(\overline{P}(x_j)-a_n(\overline{P}))\overline{P}(x_j)}\sum_{k=j}^{r-1}\overline{P}(x_k).
\end{eqnarray*}Therefore
\begin{eqnarray*}
\limsup_{n\rightarrow+\infty}\frac{\left\vert \mu_R^{(n)}(x_j)-\mu_R(x_j)\right\vert}{a_n(\overline{P})}&\leq &
\frac{r-j}{\overline{P}(x_j)}+\frac{1}{(\overline{P}(x_j))^2}\sum_{k=j}^{r-1}\overline{P}(x_k),\ \ \text{a.s},
\end{eqnarray*}which proves the claim \eqref{muras}. \\

\noindent Let prove the claim \eqref{murasan}. \\
\noindent Going back to \eqref{mras} and  using \eqref{maxanp}, we have asymptotically, for a fixed $j\in [1,r-1]$ 

\begin{eqnarray*}
\sqrt(\mu_R^{(n)}(x_j)-\mu_R(x_j))&\approx &T_{\mu_R,n}^{(1)}-T_{\mu_R,n}^{(2)}
\end{eqnarray*} where 
\begin{eqnarray*}
\displaystyle T_{\mu_R,n}^{(1)}(j)&=& \frac{1}{\overline{P}(x_j)} \sum_{k=j}^{r-1}\sqrt{n}(\overline{P}_n(x_k)-\overline{P}(x_k))\\ 
\text{and}\ \ 
T_{\mu_R,n}^{(2)}&=&\frac{
\sqrt{n}(\overline{P}_n(x_j)-\overline{P}(x_j))
}{(\overline{P}(x_j))^2}\sum_{k=j}^{r-1}\overline{P}(x_k).
\end{eqnarray*}

\noindent We already know that, for $j\in [1,r-1]$, as $n \rightarrow+\infty,$ 
\begin{eqnarray*}
T_{\mu_R,n}^{(1)}(j)&\stackrel{\mathcal{D} }{\rightsquigarrow}&
T_{\mu_R,1}(j)\stackrel{d}{\sim}\mathcal{N}(0,\sigma_{\mu_R,1}^2(j))\\
 \ \text{and}\ \ T_{\mu_R,n}^{(1)}(j)&\stackrel{\mathcal{D} }{\rightsquigarrow}& T_{\mu_R,2}(j)\stackrel{d}{\sim}\mathcal{N}(0,\sigma_{\mu_R,2}^2(j)),
\end{eqnarray*}with 
\begin{eqnarray*}
\sigma_{\mu_R,1}^2(j)&=&\frac{1}{(\overline{P}(x_j))^2}\biggr[ \sum_{k=j}^{r-1} \overline{P}(x_k)(1-\overline{P}(x_k)) -2\sum_{\begin{tabular}{ c}
$(k,k')\in [j,r-1]^2$\\
$k\neq k'$\end{tabular}} \overline{P}(x_k) \overline{P}(x_{k'})\biggr]\\
\text{and} \ \ \sigma_{\mu_R,2}^2(j)&=&\frac{1-\overline{P}(x_j)}{( \overline{P}(x_j))^3}\biggr[\sum_{k=j}^{r-1} \overline{P}(x_k)\biggr]^2.
\end{eqnarray*}
Therefore 
\begin{eqnarray*}
\sqrt{n}(\mu_R^{(n)}(x_j)-\mu_R(x_j))\stackrel{\mathcal{D} }{\rightsquigarrow}\mathcal{N}(0,\sigma_{\mu_R}^2(j)).
\end{eqnarray*}with \begin{equation*}
\sigma_{\mu_R}^2(j)=\sigma_{\mu_R,1}^2(j)+\sigma_{\mu_R,2}^2(j)+2\,\text{Cov}\left(T_{\mu_R,1}(j),T_{\mu_R,2}(j)\right),\ \ j\in [1,r-1].
\end{equation*}
Which confirms the claim \eqref{murasan} and ends the proof of the proposition. 

\end{proof}

\bigskip \noindent The Proposition \ref{propast} below establishes the asymptotic behavior of the discrete inactivity lifetime estimator at time $x_j,\ \  j\in J$. \\

\noindent 
We omit the proof which is almost exactly the same as that of Proposition \ref{promur}.\\

\bigskip \noindent For a fixed $j\in [1,r-1],$ let \begin{equation}\label{meanp}
 \mu_P^{(n)}(x_j)= \frac{1}{P_n(x_j)}\sum_{k=1}^jP_n(x_k),
\end{equation}and denote \begin{eqnarray*}
A_{\mu_P}(x_j)&=&\frac{j}{P(x_j)}+\frac{1}{(P(x_j))^2}\sum_{k=1}^{j}P(x_k)
\\
\sigma_{\mu_P}^2(j)&=&\sigma_{\mu_P,1}^2(j)+\sigma_{\mu_P,2}^2(j)+2\,\text{Cov}\left(T_{\mu_P,1}(j),T_{\mu_P,2}(j)\right),
\end{eqnarray*}where
\begin{eqnarray*}
T_{\mu_P,1}(j)\stackrel{d}{\sim}\mathcal{N}(0,\sigma_{\mu_P,1}^2(j))\ \ \text{and}\ \ T_{\mu_P,2}(j)\stackrel{d}{\sim}\mathcal{N}(0,\sigma_{\mu_P,2}^2(j))
\end{eqnarray*} with \begin{eqnarray*}
\sigma_{\mu_P,1}^2(j)&=&\frac{1}{(P(x_j))^2} \biggr[ \sum_{k=1}^{j}P(x_k)(1-P(x_k))-\ \ 2\sum_{\begin{tabular}{ c}
$(k,k')\in [1,j]^2$\\
$k\neq k'$\end{tabular}}
P(x_k)P(x_{k'})\biggr]\\
\text{and} \ \ \sigma_{\mu_P,2}^2(j)&=&\frac{1-P(x_j)}{( P(x_j))^3}\biggr[\sum_{k=1}^{j} P(x_k)\biggr]^2.
\end{eqnarray*}
\begin{proposition}\label{propast}For $j\in [1,r-1]$
, let $
\mu_P^{(n)}(x_j)$ defined by \eqref{meanp}. Then  the following asymptotic results hold
\begin{eqnarray}
&&\limsup_{n\rightarrow+\infty}\frac{|\mu_P^{(n)}(x_j)-\mu_P(x_j)| }{a_n(P)}\leq  A_{\mu_P}(x_j),\ \ \text{a.s.}\\
&&\sqrt{n}(\mu_P^{(n)}(x_j)-\mu_P(x_j))\stackrel{\mathcal{D} }{\rightsquigarrow} \mathcal{N}(0,
\sigma_{\mu_P}^2(j)),\ \ \text{as}\ \ n\rightarrow+\infty.\end{eqnarray}
\end{proposition}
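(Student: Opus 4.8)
The plan is to follow the proof of Proposition~\ref{promur} line by line, substituting the distribution function $P$ and its empirical counterpart $P_n$ for the survival function $\overline{P}$ and $\overline{P}_n$, replacing the summation range $[j,r-1]$ by $[1,j]$, and invoking the multinomial central limit theorem \eqref{ovepas} for $(\delta_n(P(x_k)))_{k\in[1,r]}$ in place of the one used for the survival function. First I would establish, by the same add-and-subtract manipulation that produced \eqref{mras}, the exact identity
\begin{equation*}
\mu_P^{(n)}(x_j)-\mu_P(x_j)=\frac{1}{P_n(x_j)}\sum_{k=1}^{j}\bigl(P_n(x_k)-P(x_k)\bigr)-\frac{P_n(x_j)-P(x_j)}{P_n(x_j)\,P(x_j)}\sum_{k=1}^{j}P(x_k).
\end{equation*}
Because this identity is exact, no mean-value or Taylor remainder (such as the $\psi''$ term appearing in Proposition~\ref{resient}) is needed here; the only nonlinearity is the random denominator $P_n(x_j)$, which is handled by Slutsky's lemma.

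For the almost sure rate I would bound the two summands by the triangle inequality, using $a_n(P)=\sup_{k}|P_n(x_k)-P(x_k)|$ from \eqref{abcn} together with the eventual lower bound $P_n(x_j)\geq P(x_j)-a_n(P)$, to obtain for $n$ large enough
\begin{equation*}
\bigl|\mu_P^{(n)}(x_j)-\mu_P(x_j)\bigr|\leq \frac{j\,a_n(P)}{P(x_j)-a_n(P)}+\frac{a_n(P)}{\bigl(P(x_j)-a_n(P)\bigr)P(x_j)}\sum_{k=1}^{j}P(x_k).
\end{equation*}
Dividing by $a_n(P)$ and letting $n\to\infty$, and using that $a_n(P)\stackrel{a.s.}{\longrightarrow}0$ (the analogue of \eqref{maxanp}, valid for $P_n$ by the convergence results accompanying \eqref{ovepas}), the limsup is at most $\frac{j}{P(x_j)}+\frac{1}{(P(x_j))^2}\sum_{k=1}^{j}P(x_k)=A_{\mu_P}(x_j)$, which is the first claim.

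For the asymptotic normality I would multiply the identity by $\sqrt{n}$ and replace $1/P_n(x_j)$ by $1/P(x_j)$ in both denominators. This is legitimate by Slutsky's lemma, since $P_n(x_j)\stackrel{a.s.}{\longrightarrow}P(x_j)$ while the $\sqrt{n}$-scaled partial sums are tight, so that the replacement error is $o_{\mathbb{P}}(1)$. This yields
\begin{equation*}
\sqrt{n}\bigl(\mu_P^{(n)}(x_j)-\mu_P(x_j)\bigr)=T_{\mu_P,n}^{(1)}(j)-T_{\mu_P,n}^{(2)}(j)+o_{\mathbb{P}}(1),
\end{equation*}
where $T_{\mu_P,n}^{(1)}(j)=\frac{1}{P(x_j)}\sum_{k=1}^{j}\sqrt{n}\bigl(P_n(x_k)-P(x_k)\bigr)$ and $T_{\mu_P,n}^{(2)}(j)=\frac{\sqrt{n}(P_n(x_j)-P(x_j))}{(P(x_j))^2}\sum_{k=1}^{j}P(x_k)$. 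Writing each increment as $\sqrt{n}(P_n(x_k)-P(x_k))=\sqrt{P(x_k)}\,\delta_n(P(x_k))$ and applying the joint convergence \eqref{ovepas} to the centered Gaussian vector $Z(\mathbf{P})$ with covariance $\Sigma_{\mathbf{P}}$, the pair $(T_{\mu_P,n}^{(1)}(j),T_{\mu_P,n}^{(2)}(j))$ converges jointly to $(T_{\mu_P,1}(j),T_{\mu_P,2}(j))$, and the difference is again Gaussian.

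The principal obstacle, everything else being routine, is the covariance bookkeeping required to identify the limiting variances. Using the entries of $\Sigma_{\mathbf{P}}$, namely $\mathbb{V}(Z_{P(x_k)})=P(x_k)(1-P(x_k))$ on the diagonal and $\mathbb{C}\text{ov}(Z_{P(x_k)},Z_{P(x_{k'})})=-\sqrt{P(x_k)P(x_{k'})}$ off it, I would expand $\mathbb{V}(T_{\mu_P,1}(j))$ and $\mathbb{V}(T_{\mu_P,2}(j))$ to recover $\sigma_{\mu_P,1}^2(j)$ and $\sigma_{\mu_P,2}^2(j)$, the negative off-diagonal correlation being responsible for the $-2\sum_{k\neq k'}$ term in $\sigma_{\mu_P,1}^2(j)$; the cross term $2\,\text{Cov}(T_{\mu_P,1}(j),T_{\mu_P,2}(j))$ is obtained in the same way. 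Since $T_{\mu_P,n}^{(1)}-T_{\mu_P,n}^{(2)}$ is a fixed linear functional of a jointly Gaussian limit, its total variance is automatically $\sigma_{\mu_P,1}^2(j)+\sigma_{\mu_P,2}^2(j)+2\,\text{Cov}(T_{\mu_P,1}(j),T_{\mu_P,2}(j))=\sigma_{\mu_P}^2(j)$, which establishes the second claim. Keeping track of the sign and the multiplicity of the off-diagonal sums is the only genuinely error-prone step.
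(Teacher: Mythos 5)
Your proposal is correct and follows essentially the same route as the paper: the paper omits the proof of Proposition \ref{propast} precisely because it is ``almost exactly the same as that of Proposition \ref{promur}'', and your argument is exactly that adaptation --- the same exact decomposition (the analogue of \eqref{mras}), the same $a_n(P)$ bound for the almost sure rate, and the same reduction to the multinomial CLT \eqref{ovepas} for the normality claim. If anything, you are more careful than the paper, since you justify replacing $1/P_n(x_j)$ by $1/P(x_j)$ via Slutsky and invoke the joint convergence of the pair $\left(T_{\mu_P,n}^{(1)},T_{\mu_P,n}^{(2)}\right)$, whereas the paper's model proof only asserts the two marginal limits before combining variances.
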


\subsection{Asymptotic behavior of discrete residual/past inaccuracy measures estimators} $\,$\\

\noindent Given $j\in [1,r-1]$, we establish asymptotic limits for $\mathcal{R}_{(X,Y)}^{(n)}(x_j)$ and for $\mathcal{P}_{(X,Y)}^{(n)}(x_j)$.\\

\noindent  For a fixed $j\in [1,r-1]$, let 
\begin{equation}\label{inmxy}
\mathcal{R}_{(X,Y)}^{(n)}(x_j)=-\sum_{k=j}^{r} \frac{\widehat{p}_{n}^{(k)}}{\overline{P}_n(x_j)}\log \frac{q_k}{\overline{Q}(x_j)}
\end{equation}and denote
\begin{eqnarray*}
A_{R_{(X,Y)}}(x_j)&=&\sum_{k=j}^{r} \left\vert\log \frac{q_k}{\overline{Q}(x_j)}\right\vert\\
 \sigma_{R_{(X,Y)}}^2(x_j)&=& \sigma_{RI,1}^2(j)+\sigma_{RI,2}^2(j)+2\,\text{Cov}\left(T_{RI,1}(j),T_{RI,2}(j)\right),
\end{eqnarray*}where $T_{RI,1}(j)\stackrel{d }{\sim}\mathcal{N}\left(0, \sigma_{RI,1}^{2}(j)\right) $ and $T_{RI,2}(j)\stackrel{d }{\sim}\mathcal{N}\left(0, \sigma_{RI,2}^{2}(j)\right) $ with 
 \begin{eqnarray*}
\sigma_{RI,1}^{2}(j)&=&\frac{1}{(\overline{P}(x_j))^2 }\biggr(
   \sum_{k=j}^{r}p_k(1-p_k)\biggr[\log \frac{q_k}{\overline{Q}(x_j)}\biggr]^2\\
  & &\ \ \ \ \ \ \ \  -\ \ \  2\ \ \sum_{
  k\neq k'
  }p_kp_{k'}\log \frac{q_k}{\overline{Q}(x_j)}\log \frac{q_{k'}}{\overline{Q}(x_j)}
  \biggr)\\
  \text{and}\ \ \sigma_{RI,2}^2(j)&=&\frac{1-\overline{P}(x_j)}{( \overline{P}(x_j))^3}\biggr[\sum_{k=j}^{r} p_k\left(\log \frac{q_k}{\overline{Q}(x_j) }\right)\biggr]^2.
 \end{eqnarray*}
\begin{proposition} \label{corinacm} For a fixed $j\in [1,r-1]$, let $
\mathcal{R}_{(X,Y)}^{(n)}(x_j)$ defined by \eqref{inmxy}.  Then the following asymptotic results hold
\begin{eqnarray}\label{rinacmeas}
&& \limsup_{n\rightarrow+\infty}\frac{ \left\vert  \mathcal{R}_{(X,Y)}^{(n)}(x_j)-\mathcal{R}_{(X,Y)}(x_j))\right \vert }{ a_{R,n}(p)}\leq A_{R_{(X,Y)}}(x_j),\ \ \text{a.s.},\\
&&\label{rinacmean}  \sqrt{n}( \mathcal{R}_{(X,Y)}^{(n)}(x_j)-\mathcal{R}_{(X,Y)}(x_j))\stackrel{\mathcal{D}}{ \rightsquigarrow} \mathcal{N}\left(0,\sigma_{R_{(X,Y)}}^2(x_j) \right),\ \ \text{as}\ \ n\rightarrow+\infty.
  \end{eqnarray}

\end{proposition}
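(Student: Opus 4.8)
The plan is to exploit the one structural feature that separates this inaccuracy measure from the residual entropy of Proposition~\ref{resient}: here the quantities $q_k$ and $\overline{Q}(x_j)$ attached to the auxiliary distribution of $Y$ are \emph{known constants}. Hence, putting $c_k:=\log\big(q_k/\overline{Q}(x_j)\big)$ and recalling $\Delta_{R,n}(p_k)$ from \eqref{Deltko}, the estimator is a \emph{linear} functional of the estimated ratios and one has the exact identity
\begin{equation*}
\mathcal{R}_{(X,Y)}^{(n)}(x_j)-\mathcal{R}_{(X,Y)}(x_j)=-\sum_{k=j}^{r}c_k\,\Delta_{R,n}(p_k).
\end{equation*}
Consequently no mean-value expansion of $\psi(x)=x\log x$ is required and the argument is strictly lighter than that of Proposition~\ref{resient}: in effect $\psi'\big(p_k/\overline{P}(x_j)\big)=1+\log\big(p_k/\overline{P}(x_j)\big)$ is replaced throughout by the constant $c_k$, and the quadratic $\psi''$ remainder disappears entirely.

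For the almost sure bound \eqref{rinacmeas} I would take absolute values, bound each $|\Delta_{R,n}(p_k)|$ by $a_{R,n}(p)$ from \eqref{arnp}, and obtain
\begin{equation*}
\big|\mathcal{R}_{(X,Y)}^{(n)}(x_j)-\mathcal{R}_{(X,Y)}(x_j)\big|\leq a_{R,n}(p)\sum_{k=j}^{r}|c_k|.
\end{equation*}
Dividing by $a_{R,n}(p)$ and passing to the $\limsup$ gives exactly $A_{R_{(X,Y)}}(x_j)=\sum_{k=j}^{r}|c_k|$, since the fact that $a_{R,n}(p)\to 0$ almost surely was already proved inside Proposition~\ref{resient} via \eqref{inegam} and \eqref{maxanp}.

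For the asymptotic normality \eqref{rinacmean} I would insert the exact decomposition \eqref{deltarn} of $\Delta_{R,n}(p_k)$ and scale by $\sqrt{n}$. Since $\widehat{p}_{n}^{(k)}\to p_k$ and $\overline{P}_n(x_j)\to\overline{P}(x_j)$ almost surely, Slutsky's lemma permits replacing the coefficient $\widehat{p}_{n}^{(k)}/\big(\overline{P}(x_j)\overline{P}_n(x_j)\big)$ by $p_k/(\overline{P}(x_j))^2$ up to an $o_{\mathbb{P}}(1)$ term, which yields
\begin{align*}
\sqrt{n}\big(\mathcal{R}_{(X,Y)}^{(n)}(x_j)-\mathcal{R}_{(X,Y)}(x_j)\big)&=-\frac{1}{\overline{P}(x_j)}\sum_{k=j}^{r}c_k\sqrt{n}\big(\widehat{p}_{n}^{(k)}-p_k\big)\\
&\quad+\frac{\sum_{k=j}^{r}c_kp_k}{(\overline{P}(x_j))^2}\sqrt{n}\big(\overline{P}_n(x_j)-\overline{P}(x_j)\big)+o_{\mathbb{P}}(1).
\end{align*}
The joint multinomial limit \eqref{pnj} sends the first sum to $T_{RI,1}(j)$ and the limit \eqref{norovp} sends the second to $T_{RI,2}(j)$; the variances $\sigma_{RI,1}^2(j)$ and $\sigma_{RI,2}^2(j)$ then follow from the covariance structures \eqref{vars1} and \eqref{vars2} by the same computation as in Proposition~\ref{resient}, while the joint Gaussian limit of the underlying vector supplies the cross term $2\,\text{Cov}\big(T_{RI,1}(j),T_{RI,2}(j)\big)$ and hence $\sigma_{R_{(X,Y)}}^2(x_j)$. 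The only genuinely technical step—and the mild analogue of the $\sqrt{n}R_n(j)=o_{\mathbb{P}}(1)$ estimate in Proposition~\ref{resient}—is to confirm that this Slutsky replacement is negligible after the $\sqrt{n}$ scaling; this is a product-of-rates argument, the discarded remainder being a sum of terms of the type $\sqrt{n}\big(\widehat{p}_{n}^{(k)}-p_k\big)\big(\overline{P}_n(x_j)-\overline{P}(x_j)\big)$ and $\sqrt{n}\big(\overline{P}_n(x_j)-\overline{P}(x_j)\big)^2$, each of order $O_{\mathbb{P}}(n^{-1/2})$.
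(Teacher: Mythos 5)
Your proposal follows essentially the same route as the paper's own proof: the paper likewise starts from the exact linear identity $\mathcal{R}_{(X,Y)}^{(n)}(x_j)-\mathcal{R}_{(X,Y)}(x_j)=-\sum_{k=j}^{r}\Delta_{R,n}(p_k)\log\bigl(q_k/\overline{Q}(x_j)\bigr)$, obtains \eqref{rinacmeas} by bounding each $|\Delta_{R,n}(p_k)|$ by $a_{R,n}(p)$, and then derives \eqref{rinacmean} by invoking the same decomposition \eqref{deltarn} and Gaussian limits \eqref{pnj} and \eqref{norovp} used in Proposition \ref{resient}. Your write-up is in fact somewhat more explicit than the paper's (which compresses the normality step into ``using the same technique as in the proof of Proposition \ref{resient}''), in particular in spelling out that the Slutsky replacement produces remainders of order $O_{\mathbb{P}}(n^{-1/2})$, but the underlying argument is identical.
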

\begin{proof}
$\,$ \\

\noindent  For a fixed $j\in [1,r-1]$, we have  
\begin{eqnarray*}
\mathcal{R}_{(X,Y)}^{(n)}(x_j)-
\mathcal{R}_{(X,Y)}(x_j)&=&-\sum_{k=j}^{r}\Delta_{R,n}(p_k) \log \frac{q_k}{\overline{Q}(x_j)},
\end{eqnarray*}where $
  \Delta_{R,n}(p_k)$ is given by \eqref{Deltko}.\\
  
  \noindent Therefore
\begin{eqnarray*}
&& \limsup_{n\rightarrow+\infty}\frac{ \left\vert  \mathcal{R}_{(X,Y)}^{(n)}(x_j)-\mathcal{R}_{(X,Y)}(x_j))\right \vert }{ a_{R,n}(p)}\leq   \sum_{k=j}^{r}\left\vert \log \frac{q_k}{\overline{Q}(x_j)} \right\vert,\ \ \text{a.s.}
\end{eqnarray*}which proves the claim \eqref{rinacmeas}.\\

\noindent Let prove the claim \eqref{rinacmean}. We have \begin{eqnarray*}
\sqrt{n}\left( \mathcal{R}_{(X,Y)}(x_j)-
\mathcal{R}_{(X,Y)}^{(n)}(x_j)\right) &=&-\sum_{k=j}^{r}\sqrt{n}\Delta_{R,n}(p_k) \log \frac{q_k}{\overline{Q}(x_j)}  
\end{eqnarray*}so that, using the same technique as in the proof of the Proposition \ref{resient}, we conclude that 
\begin{eqnarray*}
\sqrt{n}\left( \mathcal{R}_{(X,Y)}(x_j)-
\mathcal{R}_{(X,Y)}^{(n)}(x_j)\right)\stackrel{\mathcal{D} }{\rightsquigarrow}\mathcal{N}\left(0, \sigma_{R_{(X,Y)}}^2(x_j)\right),
\end{eqnarray*}where 
\begin{equation*}
 \sigma_{R_{(X,Y)}}^2(x_j)=\sigma_{RI,1}^2(j)+\sigma_{RI,2}^2(j)+2\,\text{Cov}\left(T_{RI,1}(j),T_{RI,2}(j)\right),
\end{equation*} 
where $T_{R,1}(j)\stackrel{d }{\sim}\mathcal{N}\left(0, \sigma_{R,1}^{2}(j)\right) $ and $T_{R,2}(j)\stackrel{d }{\sim}\mathcal{N}\left(0, \sigma_{R,2}^{2}(j)\right) $.\\

\noindent Which confirms the claim \eqref{rinacmean} and ends the proof of the proposition.
\end{proof}

\bigskip \noindent 
The following proposition concerns the almost sure converge and the asymptotic normality of the discrete past inaccuracy measure estimator between $X$ and $Y$. \\

\noindent  For a fixed $j\in [1,r]$, let  
\begin{eqnarray}\label{inpmxy}
 \mathcal{P}_{(X,Y)}^{(n)}(x_j)=-\sum_{k=1}^{j} \frac{\widehat{p}_{n}^{(k)}}{P_n(x_j)}\log \frac{q_k}{Q(j)}
\end{eqnarray}
and denote
\begin{eqnarray*}
A_{P_{(X,Y)}}(x_j)&=&\sum_{k=1}^{j} \left\vert\log \frac{q_k}{Q(j)}\right\vert\\
\sigma_{P_{(X,Y)}}^2(x_j)&=& \sigma_{PI,1}^2(j)+\sigma_{PI,2}^2(j)+2\,\text{Cov}\left(T_{PI,1}(j),T_{PI,2}(j)\right),
\end{eqnarray*}
where $T_{PI,1}(j)\stackrel{d }{\sim}\mathcal{N}\left(0, \sigma_{PI,1}^{2}(j)\right) $ and $T_{PI,2}(j)\stackrel{d }{\sim}\mathcal{N}\left(0, \sigma_{PI,2}^{2}(j)\right) $ with 
 \begin{eqnarray*}
\sigma_{PI,1}^{2}(j)&=&\frac{1}{(P(x_j))^2 }\biggr(
   \sum_{k=1}^{j}p_k(1-p_k)\biggr[\log \frac{q_k}{Q(x_j)}\biggr]^2\\
  & &\ \ \ \ \ \ \ \  -\ \ \  2\ \ \sum_{
  k\neq k'
  }p_kp_{k'}\log \frac{q_k}{Q(x_j)}\log \frac{q_{k'}}{Q(x_j)}
  \biggr)\\
  \text{and}\ \ \sigma_{PI,2}^2(j)&=&\frac{1-P(x_j)}{(P(x_j))^3}\biggr[\sum_{k=1}^{j} p_k\left(\log \frac{q_k}{Q(x_j) }\right)\biggr]^2,\ \ (j\neq r).
 \end{eqnarray*}
\begin{proposition} \label{copinacm} For a fixed $j\in [1,r]$, let $
\mathcal{P}_{(X,Y)}^{(n)}(x_j)$ defined by \eqref{inpmxy}. Then the following asymptotic results hold
\begin{eqnarray}\label{pinacmeas}
&& \limsup_{n\rightarrow+\infty}\frac{ \left\vert  \mathcal{P}_{(X,Y)}^{(n)}(x_j)-\mathcal{P}_{(X,Y)}(x_j))\right \vert }{ a_{P,n}(p) }\leq A_{(X,Y)}^{(P)}(x_j),\ \ \text{a.s.},\\
&&\label{pinacmean}  \sqrt{n}( \mathcal{P}_{(X,Y)}^{(n)}(x_j)-\mathcal{P}_{(X,Y)}(x_j))\stackrel{\mathcal{D}}{ \rightsquigarrow} \mathcal{N}\left(0,\sigma_{P_{(X,Y)}}^2(x_j) \right),\ \ \text{as}\ \ n\rightarrow+\infty,
  \end{eqnarray}where 
 $  a_{P,n}(p)$ is given by \eqref{appn}.

\end{proposition}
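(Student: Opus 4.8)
The plan is to exploit the fact that, unlike the past \emph{entropy} estimator, the past \emph{inaccuracy} estimator is linear in the empirical masses $\widehat{p}_n^{(k)}$, since the weights $\log(q_k/Q(x_j))$ are deterministic. Writing $\Delta_{P,n}(p_k)=\widehat{p}_n^{(k)}/P_n(x_j)-p_k/P(x_j)$ (the past analogue of \eqref{Deltko}), a direct subtraction gives
$$\mathcal{P}_{(X,Y)}^{(n)}(x_j)-\mathcal{P}_{(X,Y)}(x_j)=-\sum_{k=1}^{j}\Delta_{P,n}(p_k)\log\frac{q_k}{Q(x_j)}.$$
This is precisely the structure encountered in Proposition \ref{corinacm}, with $P$, $Q$ and the summation range $[1,j]$ replacing $\overline{P}$, $\overline{Q}$ and $[j,r]$, so the whole argument transfers with only cosmetic changes.

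For the almost-sure claim \eqref{pinacmeas}, I would bound each $|\Delta_{P,n}(p_k)|$ by $a_{P,n}(p)$, its own supremum by definition \eqref{appn}, and factor it out:
$$\left\vert\mathcal{P}_{(X,Y)}^{(n)}(x_j)-\mathcal{P}_{(X,Y)}(x_j)\right\vert\leq a_{P,n}(p)\sum_{k=1}^{j}\left\vert\log\frac{q_k}{Q(x_j)}\right\vert.$$
Dividing by $a_{P,n}(p)$ and letting $n\to\infty$ yields the constant $A_{P_{(X,Y)}}(x_j)$. It remains only to record that $a_{P,n}(p)\to 0$ a.s., which follows by decomposing $\Delta_{P,n}(p_k)$ into a $(\widehat{p}_n^{(k)}-p_k)$-term and a $(P_n(x_j)-P(x_j))$-term and invoking \eqref{maxanp}, exactly as for $a_{R,n}(p)$ in the residual case.

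For the asymptotic normality \eqref{pinacmean}, I would multiply by $\sqrt{n}$ and linearize $\Delta_{P,n}(p_k)$ as
$$\Delta_{P,n}(p_k)=\frac{\widehat{p}_n^{(k)}-p_k}{P(x_j)}-\widehat{p}_n^{(k)}\frac{P_n(x_j)-P(x_j)}{P(x_j)P_n(x_j)},$$
the past analogue of \eqref{deltarn}. Replacing $\widehat{p}_n^{(k)}$ by $p_k$ and $P_n(x_j)$ by $P(x_j)$ in the denominator of the second term produces a remainder that is $o_{\mathbb{P}}(1)$ after multiplication by $\sqrt n$; this is the one genuinely technical point, handled by the same Bienaym\'e--Tchebychev estimates as in \eqref{pal}--\eqref{pbet}, which show $\sqrt n\,(a_{P,n}(p))^2=o_{\mathbb{P}}(1)$. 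The surviving leading terms then split into two pieces: the $(\widehat{p}_n^{(k)}-p_k)$-piece converges via \eqref{pnj} to $T_{PI,1}(j)\stackrel{d}{\sim}\mathcal{N}(0,\sigma_{PI,1}^2(j))$, and the $(P_n(x_j)-P(x_j))$-piece converges via \eqref{ovepas} to $T_{PI,2}(j)\stackrel{d}{\sim}\mathcal{N}(0,\sigma_{PI,2}^2(j))$.

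The main obstacle, insofar as one wants $\sigma_{PI,1}^2(j)$ fully explicit, is the covariance bookkeeping: because all $\widehat{p}_n^{(k)}$ arise from the same multinomial sample, the limiting vector $(Z_{p_k})$ carries the off-diagonal covariances $-\sqrt{p_kp_{k'}}$ from $\Sigma_{\mathbf p}$ in \eqref{vars1}, which generate the cross terms in $\sigma_{PI,1}^2(j)$. For the same reason $T_{PI,1}(j)$ and $T_{PI,2}(j)$ are dependent, so the final variance must retain the term $2\,\mathrm{Cov}(T_{PI,1}(j),T_{PI,2}(j))$, giving $\sigma_{P_{(X,Y)}}^2(x_j)=\sigma_{PI,1}^2(j)+\sigma_{PI,2}^2(j)+2\,\mathrm{Cov}(T_{PI,1}(j),T_{PI,2}(j))$. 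Since both pieces are continuous linear images of the single joint Gaussian limit of the multinomial, they converge jointly, their sum is Gaussian, and \eqref{pinacmean} follows.
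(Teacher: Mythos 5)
Your proposal is correct and follows essentially the same route as the paper: the paper omits this proof precisely because it transfers the argument of Proposition \ref{corinacm} (itself relying on the linearization technique of Proposition \ref{resient}), which is exactly what you do — exploiting the linearity of the inaccuracy estimator in the $\Delta_{P,n}(p_k)$, bounding by $a_{P,n}(p)$ for the almost-sure claim, and splitting $\sqrt{n}\,\Delta_{P,n}(p_k)$ into the two jointly Gaussian multinomial pieces plus a Chebyshev-negligible remainder for the normality claim. Your remarks on the off-diagonal covariances from $\Sigma_{\mathbf{p}}$ and on retaining the term $2\,\mathrm{Cov}(T_{PI,1}(j),T_{PI,2}(j))$ match the paper's variance bookkeeping.
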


\bigskip \noindent The proof is similar to that of Proposition \ref{corinacm}. Hence omitted.
\\

\subsection{Asymptotic behavior of the discrete cumulative residual/past inaccuracy measures}$\,$\\

\bigskip \noindent 
The following proposition concerns the almost sure converge and the asymptotic normality of the discrete cumulative residual inaccuracy measure estimator between $X$ and $Y$.
\\

\noindent Let 
\begin{equation}\label{crines}
\mathcal{CR}_{(X,Y)}^{(n)}=-\sum_{j=1}^{r-1} \overline{P}_n(x_j)\log \overline{Q}(x_j)
\end{equation} 
  and denote
  \begin{eqnarray*}
  A_{CR_{(X,Y)}}&=&\sum_{j=1}^{r-1}\left\vert \log \overline{Q}(x_j)\right\vert,\\
  \sigma_{CR_{(X,Y)}}^2&=&\sum_{j=1}^{r-1} \overline{P}(x_j)(1-\overline{P}(x_j))\left(\log \overline{Q}(x_j)\right) ^2\\
 &&\ \ \ \ -\ \ 2\sum_{
\begin{tabular}{ c}
$(j,j')\in [1,r-1]^2$\\
$j\neq j'$\end{tabular} }\overline{P}(x_j)\overline{P}(x_{j'})\left(\log \overline{Q}(x_j)\right)\left(\log \overline{Q}(x_{j'})\right).
  \end{eqnarray*}
 \begin{proposition}
 Let $
\mathcal{CR}_{(X,Y)}^{(n)}$ defined by \eqref{crines}.
 Then the following asymptotic results hold
\begin{eqnarray}
&& \limsup_{n\rightarrow+\infty}\frac{ \left\vert \mathcal{CR}_{(X,Y)}^{(n)}-\mathcal{CR}_{(X,Y)}\right \vert }{ a_n(P) }\leq  A_{CR_{(X,Y)}},\ \ \text{a.s.},\\
&& \sqrt{n}( \mathcal{CR}_{(X,Y)}^{(n)}-\mathcal{CR}_{(X,Y)})\stackrel{\mathcal{D}}{ \rightsquigarrow} \mathcal{N}\left(0,
  \sigma_{CR_{(X,Y)}}^2 \right),\ \ \text{as}\ \ n\rightarrow+\infty.
  \end{eqnarray}

 \end{proposition}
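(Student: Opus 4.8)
The plan is to exploit a structural simplification that sets this proposition apart from Proposition \ref{cumres}: here the coefficients $\log\overline{Q}(x_j)$ are \emph{deterministic} (the distribution $\mathbf{q}$ of $Y$ is known and not estimated), so $\mathcal{CR}_{(X,Y)}^{(n)}$ is an exact \emph{linear} functional of the empirical survival values $\overline{P}_n(x_j)$. Consequently no Taylor/mean-value expansion of $\psi(x)=x\log x$ is needed. Subtracting the two expressions termwise, I would first record the identity
\[
\mathcal{CR}_{(X,Y)}^{(n)} - \mathcal{CR}_{(X,Y)} = -\sum_{j=1}^{r-1}\bigl(\overline{P}_n(x_j) - \overline{P}(x_j)\bigr)\log\overline{Q}(x_j) = -\sum_{j=1}^{r-1}\Delta_n(\overline{P}(x_j))\,\log\overline{Q}(x_j).
\]

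For the almost sure statement, I would simply apply the triangle inequality and bound each increment by the supremum, $|\Delta_n(\overline{P}(x_j))|\le a_n(\overline{P})$, to obtain
\[
\frac{\bigl|\mathcal{CR}_{(X,Y)}^{(n)} - \mathcal{CR}_{(X,Y)}\bigr|}{a_n(\overline{P})} \leq \sum_{j=1}^{r-1}\bigl|\log\overline{Q}(x_j)\bigr| = A_{CR_{(X,Y)}}.
\]
Since this holds for every $n$, the $\limsup$ inherits the bound, and combining it with $a_n(\overline{P})\stackrel{a.s.}{\to}0$ from \eqref{maxanp} gives both strong consistency and the claimed rate. (As in the proof of Proposition \ref{cumres}, the natural scaling is by $a_n(\overline{P})$ rather than $a_n(P)$.)

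For the asymptotic normality, I would multiply the linear expansion by $\sqrt{n}$ and rewrite each scaled increment as $\sqrt{n}\,\Delta_n(\overline{P}(x_j)) = \sqrt{\overline{P}(x_j)}\,\delta_n(\overline{P}(x_j))$, yielding
\[
\sqrt{n}\bigl(\mathcal{CR}_{(X,Y)}^{(n)} - \mathcal{CR}_{(X,Y)}\bigr) = -\sum_{j=1}^{r-1}\log\overline{Q}(x_j)\,\sqrt{\overline{P}(x_j)}\,\delta_n(\overline{P}(x_j)).
\]
This is a fixed linear form in the vector $\bigl(\delta_n(\overline{P}(x_j))\bigr)_{1\le j\le r-1}$, which converges in distribution to $Z(\overline{\mathbf{P}})\sim\mathcal{N}(0,\Sigma_{\overline{\mathbf{P}}})$ by \eqref{ovpnj}. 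A linear map of a Gaussian vector is Gaussian and centered, so the limit is $\mathcal{N}(0,\sigma_{CR_{(X,Y)}}^2)$; the variance is recovered by expanding $\mathbb{V}\bigl(\sum_j \log\overline{Q}(x_j)\sqrt{\overline{P}(x_j)}\,Z_{\overline{P}(x_j)}\bigr)$ and substituting $\mathbb{V}(Z_{\overline{P}(x_j)})=1-\overline{P}(x_j)$ and $\text{Cov}(Z_{\overline{P}(x_j)},Z_{\overline{P}(x_{j'})})=-\sqrt{\overline{P}(x_j)\overline{P}(x_{j'})}$ read off from \eqref{vars2}.

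The reason there is essentially no obstacle is precisely this linearity: there is no nonlinear transformation of the empirical probabilities, hence no second-order remainder term $R_n$ to discard and no Bienaymé--Tchebychev control of $\sqrt{n}\,a_n(\overline{P})^2$ as was required in Proposition \ref{cumres}. The only delicate bookkeeping is the sign in the off-diagonal entries of $\Sigma_{\overline{\mathbf{P}}}$, which must be tracked carefully so that the cross terms enter with a minus sign and reproduce exactly the $-2\sum_{j\neq j'}$ appearing in the definition of $\sigma_{CR_{(X,Y)}}^2$.
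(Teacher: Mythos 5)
Your proof is correct and is essentially the argument the paper intends: the paper omits this proof with a reference to Proposition \ref{corinacm}, whose structure — an exact linear identity in the empirical increments with deterministic coefficients $\log\overline{Q}(x_j)$, a triangle-inequality bound giving the almost sure rate, and the multivariate Gaussian limit \eqref{ovpnj} mapped through a fixed linear form for the normality — is precisely what you carry out, here in the even simpler purely linear case where no remainder term arises. Your substitution of $a_n(\overline{P})$ for $a_n(P)$ is immaterial, since $|\overline{P}_n(x_j)-\overline{P}(x_j)|=|P_n(x_j)-P(x_j)|$ for every $j\le r-1$ and $P_n(x_r)=P(x_r)=1$, so the two normalizing sequences coincide.
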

 
\bigskip \noindent The proof is similar to that of Proposition \ref{corinacm}. Hence omitted.
\\

  \bigskip\noindent The following proposition concerns the almost sure converge and the asymptotic normality of the discrete cumulative past inaccuracy measure estimator between $X$ and $Y$.
\\
 \begin{equation}\label{cpines}
\mathcal{CP}_{(X,Y)}^{(n)}=-\sum_{j=1}^{r} P_n(x_j)\log Q(x_j)
\end{equation}
and denote
 \begin{eqnarray*}
  A_{CP_{(X,Y)}}&=&\sum_{j=1}^{r}\left\vert \log Q(x_j)\right\vert\\
  \sigma_{CP_{(X,Y)}}^2&=&\sum_{j=1}^{r} P(x_j)(1-P(x_j))\left(\log Q(x_j)\right) ^2\\
 &&\ \ \ \ -\ \ 2\sum_{
\begin{tabular}{ c}
$(j,j')\in [1,r]^2$\\
$j\neq j'$\end{tabular} }P(x_j)P(x_{j'})\left(\log Q(x_j)\right)\left(\log Q(x_{j'})\right).
  \end{eqnarray*}
  \begin{proposition}
 Let $\mathcal{CP}_{(X,Y)}^{(n)}$ defined by \eqref{cpines}.  Then the following asymptotic results hold
\begin{eqnarray}
&& \limsup_{n\rightarrow+\infty}\frac{ \left\vert \mathcal{CP}_{(X,Y)}^{(n)}-\mathcal{CP}_{(X,Y)}\right \vert }{ a_n(P) }\leq  A_{CP_{(X,Y)}},\ \ \text{a.s.},\\
&& \sqrt{n}( \mathcal{CP}_{(X,Y)}^{(n)}-\mathcal{CP}_{(X,Y)})\stackrel{\mathcal{D}}{ \rightsquigarrow} \mathcal{N}\left(0,
  \sigma_{CP_{(X,Y)}}^2 \right),\ \ \text{as}\ \ n\rightarrow+\infty.
  \end{eqnarray}
  \end{proposition}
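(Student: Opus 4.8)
The plan is to exploit the fact that, unlike the cumulative \emph{entropy} estimator of Proposition~\ref{cumpent}, the estimator $\mathcal{CP}_{(X,Y)}^{(n)}$ depends on the data only through the \emph{linear} functional $\bigl(P_n(x_j)\bigr)_{j}\mapsto-\sum_{j=1}^{r}P_n(x_j)\log Q(x_j)$, the coefficients $\log Q(x_j)$ being fixed and finite (under Assumption~\eqref{BD} each $Q(x_j)=\sum_{k\leq j}q_k\geq q_1>0$). Consequently no mean-value expansion of the type used in Propositions~\ref{resient} and~\ref{cumres} is needed, and there is no nonlinear remainder term to control. Writing $\Delta_n(P(x_j))=P_n(x_j)-P(x_j)$, one has the exact identity
\begin{equation*}
\mathcal{CP}_{(X,Y)}^{(n)}-\mathcal{CP}_{(X,Y)}=-\sum_{j=1}^{r}\Delta_n(P(x_j))\log Q(x_j).
\end{equation*}
First I would establish the almost sure bound: by the triangle inequality,
\begin{equation*}
\bigl|\mathcal{CP}_{(X,Y)}^{(n)}-\mathcal{CP}_{(X,Y)}\bigr|\leq a_n(P)\sum_{j=1}^{r}\bigl|\log Q(x_j)\bigr|,
\end{equation*}
so dividing by $a_n(P)$ already yields the uniform bound $A_{CP_{(X,Y)}}$; since $a_n(P)\to 0$ a.s.\ (the same strong-law argument behind \eqref{maxanp} applies verbatim to $P_n$), the $\limsup$ claim follows at once.

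Next, for the asymptotic normality I would multiply by $\sqrt{n}$ and rewrite each increment as $\sqrt{n}\,\Delta_n(P(x_j))=\sqrt{P(x_j)}\,\delta_n(P(x_j))$, obtaining
\begin{equation*}
\sqrt{n}\bigl(\mathcal{CP}_{(X,Y)}^{(n)}-\mathcal{CP}_{(X,Y)}\bigr)=-\sum_{j=1}^{r}\sqrt{P(x_j)}\,\log Q(x_j)\,\delta_n(P(x_j)).
\end{equation*}
By the joint multinomial limit \eqref{ovepas}, the vector $\bigl(\delta_n(P(x_j))\bigr)_{j\in[1,r]}$ converges in distribution to $Z(\mathbf{P})\sim\mathcal{N}(0,\Sigma_{\mathbf{P}})$; applying the continuous linear map $z\mapsto-\sum_j\sqrt{P(x_j)}\log Q(x_j)\,z_j$ shows that the left-hand side converges to a centered Gaussian. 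Its variance is then computed exactly as in Proposition~\ref{cumres}, using $\mathbb{V}(Z_{P(x_j)})=1-P(x_j)$ and $\mathrm{Cov}\bigl(Z_{P(x_j)},Z_{P(x_{j'})}\bigr)=-\sqrt{P(x_j)P(x_{j'})}$ read off from $\Sigma_{\mathbf{P}}$, with the role previously played by $\psi'(\cdot)$ now taken by $\log Q(x_j)$; this produces precisely $\sigma_{CP_{(X,Y)}}^2$.

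The main point I would emphasise is that there is essentially \emph{no} serious obstacle here: linearity removes both the Taylor expansion and the delicate $\sqrt{n}\,(a_n(P))^2=o_{\mathbb{P}}(1)$ remainder estimate that were the only real work in the entropy cases. The only items demanding care are bookkeeping ones, namely checking finiteness of the coefficients through Assumption~\eqref{BD}, and observing that the index $j=r$ contributes nothing since $Q(x_r)=1$ forces $\log Q(x_r)=0$; in particular the degeneracy $P_n(x_r)\equiv 1$ (equivalently $Z_{P(x_r)}=0$) is harmless in both the almost sure bound and the variance sum.
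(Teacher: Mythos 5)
Your proof is correct and is essentially the argument the paper intends: the paper omits this proof as similar to Proposition~\ref{corinacm}, and in this cumulative case that argument reduces to exactly your linearity observation --- the error is the exact linear combination $-\sum_{j=1}^{r}\Delta_n(P(x_j))\log Q(x_j)$, so the almost sure bound is immediate and the asymptotic normality follows by applying a fixed linear map to the Gaussian limit in \eqref{ovepas}, with the variance read off from $\Sigma_{\mathbf{P}}$ just as in Proposition~\ref{cumres}. Your side remarks (no Taylor remainder to control, and the vanishing of the $j=r$ term since $\log Q(x_r)=0$) are consistent with the paper and slightly sharpen the bookkeeping.
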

  
\bigskip \noindent The proof is also similar to that of Proposition \ref{corinacm}. Hence omitted.
\\

\section{Simulation study} \label{simul}

\noindent In this section, we present two examples to demonstrate the consistency and the asymptotic normality of the proposed measures of information estimators developed in the previous sections. 
\begin{example}
$\,$
\end{example}
\noindent For simplicity, let $X$ be a discrete random variable whose probability distribution is that of a discrete Weibull distribution of type $II$ 
 with maximum lifetime $r=6$ and shape parameter $\beta=2$.\\
  This discrete lifetime distribution is  used in reability for modeling discrete lifetimes of components. Its \textit{p.m.f.}  is defined by (see \cite{ceryl})
$$p_k= \frac{k}{6}\prod_{i=1}^{k-1}\left( \frac{6-i}{6}\right),\ \ k=1,2,3,\cdots,6,$$ that is 

\begin{eqnarray*}
&& p_1=1/6,\ \ p_2=5/18,\ \ p_3=5/18,\ \ \\
&& p_4=5/27,\ \ p_5=25/324, \ \ \text{and}\ \ p_6=5/324.
\end{eqnarray*}

\bigskip \noindent \textsc{Table} \ref{tabrp} presents the values of $\mathcal{R}_X(x_j)$, $\mathcal{P}_X(x_j)$, $\mu_R(x_j)$, and of $\mu_P(x_j)$, where $x_j\in[1,6]$. We observe that
as the age $x_j$ increases, $\mathcal{R}_X(x_j)$ decreases while $\mathcal{P}_X(x_j)$ increases.
\noindent Hence $X$ has a decreasing uncertainty of residual lifetime and an increasing uncertainty of past lifetime.\\

\noindent \textsc{Table} \ref{curpe} presents the values of $\mathcal{E}_{Sh}(X)$, $\mathcal{CR}_X$, and $\mathcal{CP}_X$.
 The uncertainty contained in distribution function is lower than that contained in the  survival function which is lower than that contained in the past entropy at times $x_6=6$ since \\
 $\mathcal{CP}_X < \mathcal{CR}_Y<\mathcal{P}_X(6).$

\begin{example}
$\,$
\end{example}
\noindent We suppose that $X$ is the actual random variable corresponding to the observations with outcomes $\{1,2,3,4\}$ and \textit{p.m.f.}'s
\begin{equation}\label{xsim2}
p_1=7/40,\ \ p_2=11/20,\ \ p_3=1/4,\ \ p_4=1/40
\end{equation}
and $Y$ is the random variable assigned by the experimenter with \textit{p.m.f.}'s   
\begin{equation}\label{ysim2}
q_k=\frac{k^3}{100},\ \ k=1,2,3,4.
\end{equation}  This distribution is discussed in
\cite{layw}.\\

\bigskip  
\noindent \textsc{Table} \ref{rpxy}
presents the values of $\mathcal{R}_{(X,Y)}(x_j)$ and $\mathcal{P}_{(X,Y)}(x_j)$, where $x_j\in[1,4]$. \\

\noindent \textsc{Table} \ref{crcpxy} presents the values of $K_{(X,Y)},$  $\mathcal{CR}_{(X,Y)}$, and $\mathcal{CP}_{(X,Y)}$.

\bigskip  \noindent In each \textsc{Figures}  \ref{figres0p5}, \ref{cre1},  \ref{mrpent}, \ref{rpinac1} and \ref{crinac1},  left panels represent the plots of information measure estimator, built from sample sizes of $n=100,\, 200,\cdots,30000,$ and the true information measure (represented by horizontal black line). The middle panels show the histograms of
the data and where the red line represents the plots of the theoretical normal distribution
calculated from the same mean and the same standard deviation of the data. The right
panels concern the Q-Q plot of the data which display the observed values against normally
distributed data (represented by the red line). We see that the underlying distribution of
the data is normal since the points fall along a straight line.

\section{Conclusion}\label{conclusion}

This paper joins a growing body of literature on estimating  residual/past entropies and  inaccuracy measures in the discrete case on finite sets. We adopted the plug-in method and we derived almost sure rates of convergence and asymptotic normality of these measures of uncertainty.

\newpage
\vspace{1ex}
\begin{center}
\vspace{1ex} 
\begin{table}
\centering
\begin{tabular}{ |c| ccc c c c| }
\hline
$x_j$ & $1$&$2$ &$3$&$4$&$5$ &6\\
\hline
&&&&&&\\
$\mathcal{R}_X(x_j)$ &  1.682734& 1.433071
& 1.192166 & -0.9357332& -8.04719&$\times$
\\
$\mathcal{P}_X(x_j)$ & 0
& 0.6615632 & 1.073394& 1.360343& 1.528503& 1.5846\\
$\mu_R(x_j)$& 2.12963& 1.694444 & 1.388889 & 1.166667&-1&$\times$\\
$\mu_P(x_j)$&1
& 1.375& 1.846154 & 2.469388 & 3.275862& 4.225309
\\
\hline
\end{tabular}
\vspace{2ex}
\caption{Discrete residual/past entropies and mean residual/past lifetime values at time $x_j$.
The distribution of $X$ being that of the \textit{Weibull} distribution of type $II$ with maximum lifetime $r=6$ and shape parameter $\beta=2$.}\label{tabrp}
\end{table}
\vspace{1ex}
\begin{table}
\vspace{1ex}
\begin{tabular}{|ccc|}
\hline
Shannon entropy &Cumulative residual entropy & Cumulative past entropy\\
$\mathcal{E}_{Sh}(X)=\mathcal{P}_X(6)
$&$\mathcal{CR}_{X}$&$\mathcal{CP}_X$\\
&&\\
$ 1.5846 \,\text{nats}$
&$ 1.118998 \ \text{nats}$ &$ 0.9975468\ \text{nat}$\\
\hline
\end{tabular}
\vspace{2ex}
\caption{Discrete cumulative residual/past entropies values. The distribution of $X$ being that of the \textit{Weibull} distribution of type $II$ with maximum lifetime $r=6$ and shape parameter $\beta=2$.}\label{curpe}
\end{table}
\end{center}

 \vspace{1ex}
 \begin{table} 
 \begin{tabular}{ |c|cccc|}
 \hline
 $x_j$ & $1$ & $2$&$3$ &$4$\\
 &&&&\\
 $\mathcal{R}_{(X,Y)}(x_j)$&3.058783&5.9994&8.630462
&$\times$\\
 $\mathcal{P}_{(X,Y)}(x_j)$ &
   0&0.6197172& 1.565414& 2.5335460
\\
\hline
  \end{tabular} 
  \vspace{2ex}
  \caption{Computations of 
   $\mathcal{R}_{(X,Y)}(x_j)$ and $\mathcal{P}_{(X,Y)}(x_j)$.}\label{rpxy}
 \end{table}

\begin{table}
\begin{tabular}{|ccc|}
\hline
Inaccuracy measure &C.R. inacc.  meas.&C.P inacc.  meas.\\
$K_{(X,Y)}=\mathcal{P}_{(X,Y)}(4)$ &$\mathcal{CR}_{(X,Y)}$&$\mathcal{CP}_{(X,Y)}$\\
&&\\
 2.5335460 nats& 0.04538414 nat
& 3.547775 nats
\\
\hline
\end{tabular}\vspace{1ex}
  \caption{Computations of $\mathcal{CR}_{(X,Y)}$ and $\mathcal{CP}_{(X,Y)}$.}\label{crcpxy}
\end{table}

\newpage

\begin{figure}
[H]
\includegraphics[scale=0.25]{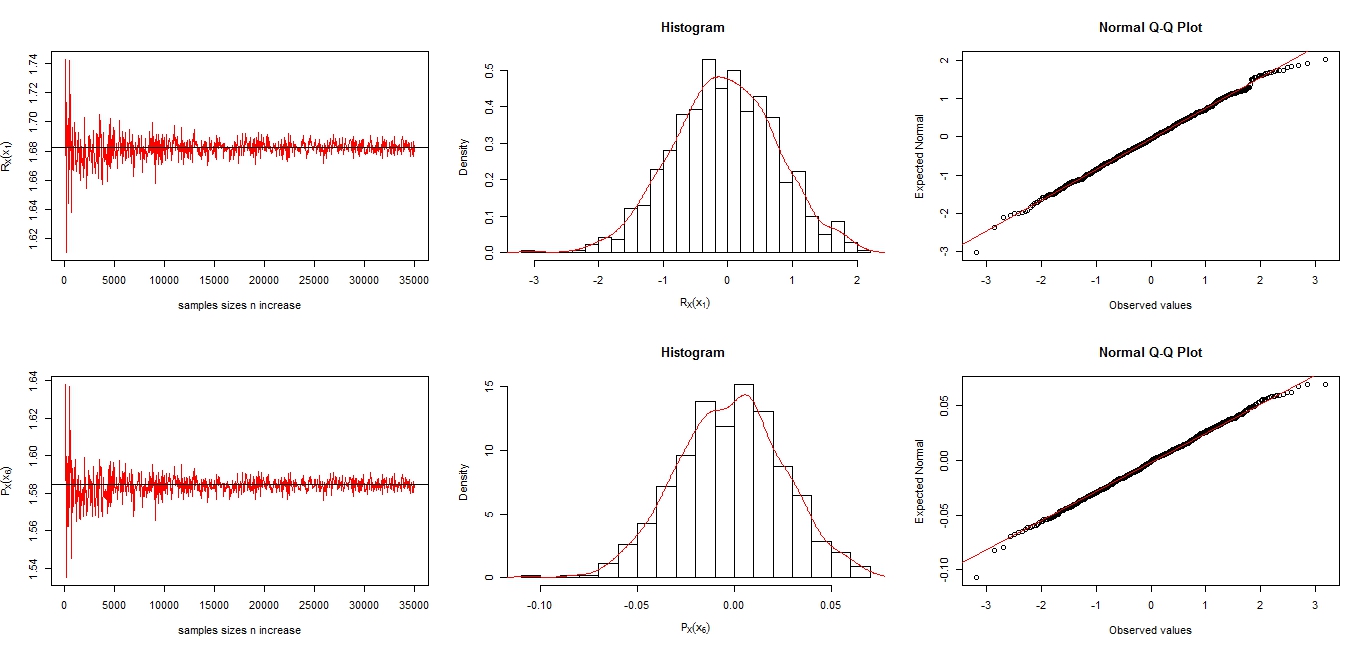} 
  \caption{Plots of $\mathcal{R}_X^{(n)}(1)$ and $\mathcal{P}_X^{(n)}(6)$ when samples sizes increase, histograms and normal Q-Q plots  versus $\mathcal{N}(0,1)$. The distribution of $X$ being that of the \textit{Weibull} distribution of type $II$ with maximum lifetime $r=6$ and shape parameter $\beta=2$.
}\label{figres0p5}
\end{figure}

\begin{figure}[H]
\includegraphics[scale=0.25]{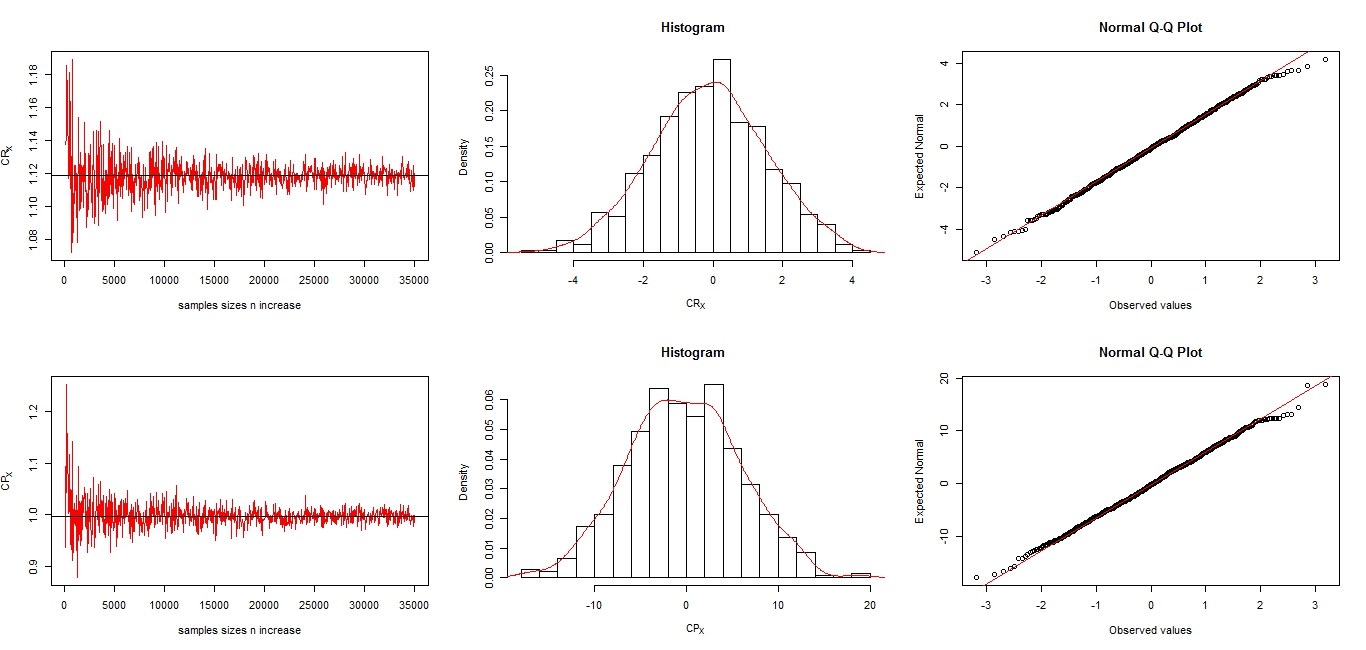} 
\caption{Plots of $ \mathcal{CR}_X^{(n)}$ and $ \mathcal{CP}_X^{(n)}$ when samples sizes increase, histograms and normal Q-Q plots  versus $\mathcal{N}(0,1)$.  The distribution of $X$ being that of the \textit{Weibull} distribution of type $II$ with maximum lifetime $r=6$ and shape parameter $\beta=2$.
}\label{cre1}
\end{figure}

\begin{figure}[H]
 \includegraphics[scale=0.25]{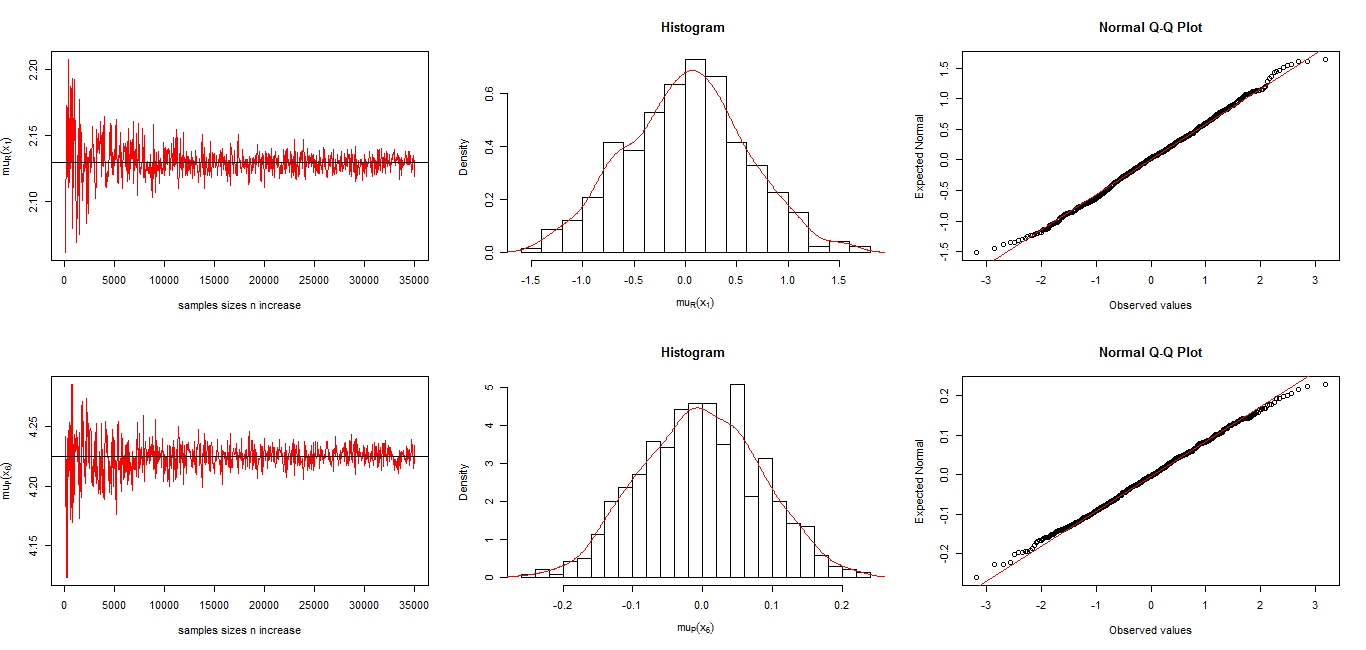} 
\caption{Plots of $\mu_R^{(n)}(1)$ and $\mu_P^{(n)}(6)$  when samples sizes increase, histograms and normal Q-Q plots  versus $\mathcal{N}(0,1)$.  The distribution of $X$ being that of the \textit{Weibull} distribution of type $II$ with maximum lifetime $r=6$ and shape parameter $\beta=2$.
}\label{mrpent}
\end{figure}

\begin{figure}[H]
\includegraphics[scale=0.25]{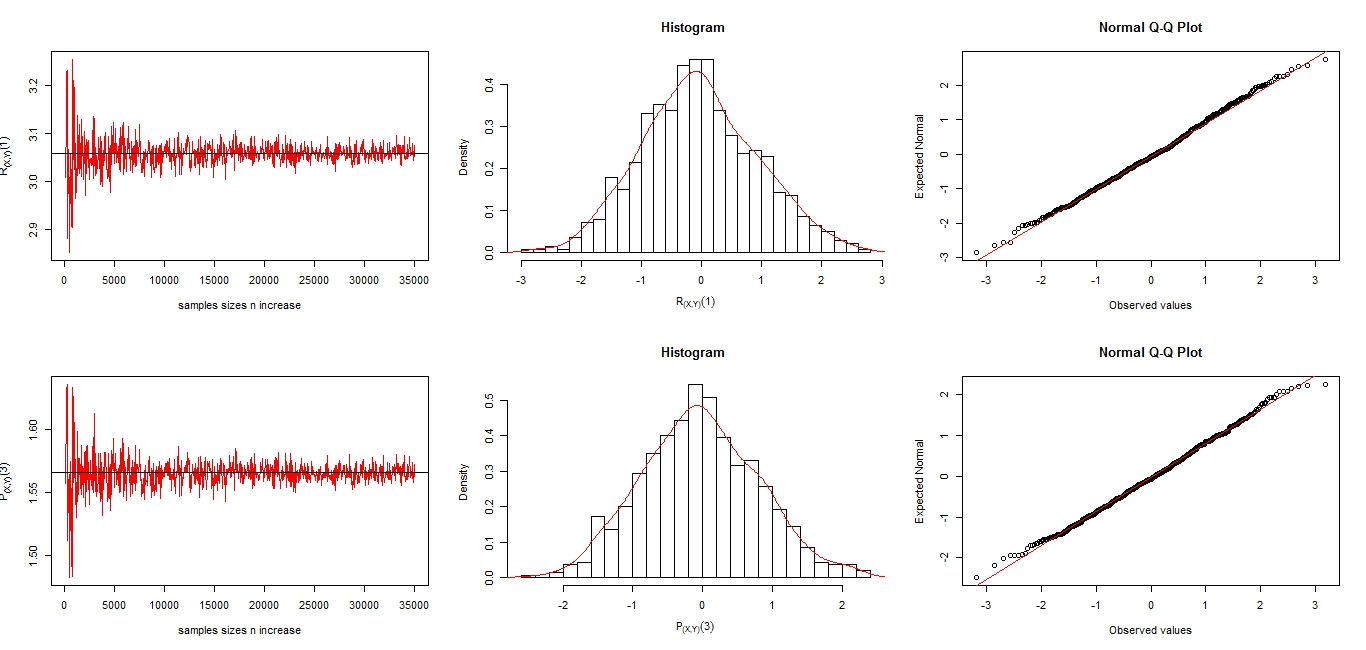} 
\caption{Plots of $\mathcal{R}_{(X,Y)}^{(n)}(1)$ and $\mathcal{P}_{(X,Y)}^{(n)}(3)$  when samples sizes increase, histograms and normal Q-Q plots  versus $\mathcal{N}(0,1)$. $X$ and $Y$ being two random variables which \textit{p.m.f.}'s are given respectively by \eqref{xsim2} and \eqref{ysim2}.
}\label{rpinac1}
\end{figure}
\begin{figure}[H]
\includegraphics[scale=0.25]{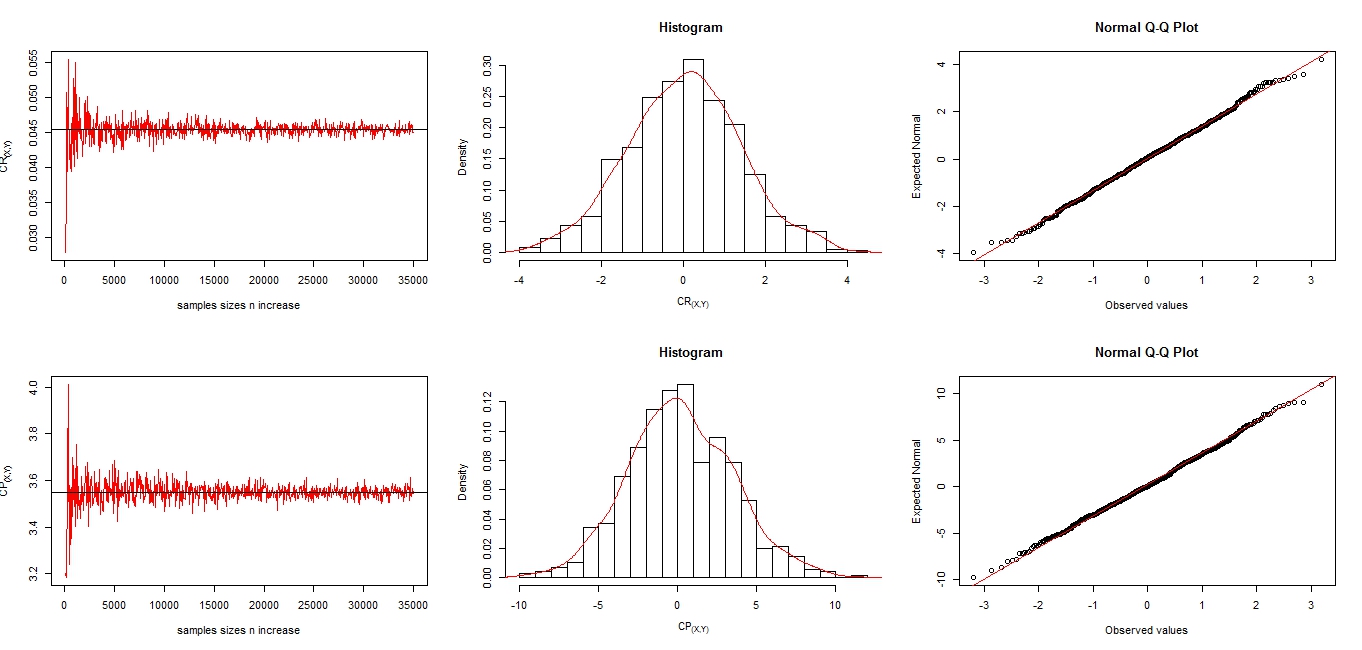}  
\caption{Plot of $\mathcal{CR}_{(X,Y)}^{(n)}$ and $\mathcal{CP}_{(X,Y)}^{(n)}$  when samples sizes increase, histograms and normal Q-Q plots  versus $\mathcal{N}(0,1)$. $X$ and $Y$ being two random variables which \textit{p.m.f.}'s are given respectively by \eqref{xsim2} and \eqref{ysim2}.
}\label{crinac1}
\end{figure}

\newpage

\end{document}